\numberwithin{equation}{section}
\newtheorem{definition}{Definition}
\newtheorem{problem}{Problem}
\newtheorem{lemma}{Lemma}
\newtheorem{theorem}{Theorem}
\newtheorem{proposition}{Proposition}
\newtheorem{corollary}{Corollary}
\newtheorem{remark}{Remark}
\newtheorem{example}{Example}
\numberwithin{equation}{section}
\numberwithin{definition}{section}
\numberwithin{lemma}{section}
\numberwithin{theorem}{section}
\numberwithin{corollary}{section}
\numberwithin{remark}{section}
\numberwithin{example}{section}
\title{Inequalities involving a measure of  Marcell\'{a}n class and zeros of corresponding orthogonal polynomials}
\author{VIKASH KUMAR$^\dagger$}
\address{$^\dagger$Department of Mathematics\\ Indian Institute of Technology, Roorkee-247667, Uttarakhand, India}
\email{vikaskr0006@gmail.com, vkumar4@mt.iitr.ac.in}
\author{A. Swaminathan$^\ddagger$}
\address{$^\ddagger$Department of Mathematics\\ Indian Institute of Technology, Roorkee-247667, Uttarakhand, India}
\email{mathswami@gmail.com, a.swaminathan@ma.iitr.ac.in}
\begin{document}
	\subjclass[2020] {Primary 42C05; Secondary 46E22}
	\keywords{Reproducing kernel, Orthogonal Polynomials on the unit circle, Chrsitoffel-Darboux Kernel, Para-orthogonal polynomials, Chain sequence}


	\begin{abstract}
			Let $\tilde{\Phi}_n$ be a quasi-orthogonal polynomial of order 1 on the unit circle, obtained from an orthogonal polynomial $\Phi_n$ with measure $\mu$, which is in the Marcell\'{a}n class, if there exist another measure $\tilde{\mu}$ such that $\tilde{\Phi}_n$ is a monic orthogonal polynomial. This article aims to investigate various properties related to the Marcell\'{a}n class. At first, we study the behaviour of the zeros between $\Phi_n$ and $\tilde{\Phi}_n$. Along with numerical examples, we analyze the zeros of $\Phi_n$, its POPUC and the linear combination of the POPUC. Further, comparison of the norm inequalities among $\Phi_n$ and $\tilde{\Phi}_n$ are obtained by involving their measures. This leads to the study of the Lubinsky type inequality between the measures $\mu$ and $\tilde{\mu}$, without using the ordering relation between $\mu$ and $\tilde{\mu}$. Additionally, similar type of inequalities for the  kernel type polynomials related to $\mu$ and $\tilde{\mu}$ are obtained.
	\end{abstract}
	\maketitle
	\markboth{Vikash Kumar and A. Swaminathan}{Marcell\'{a}n Class}

	\section{Introduction}
	Suppose $\{\Phi_{n}\}$ be the sequence of monic orthogonal polynomials with respect to the non-trivial positive Borel measure $\mu$ on the unit circle $\partial\mathbb{D}=\{z\in \mathbb{C}; \lvert z\rvert=1\}$. In other words, the polynomials satisfy the orthogonality condition:
	\begin{align*}
		\int_{\partial\mathbb{D}} \Phi_{n}(z)\overline{\Phi_{m}(z)}d\mu(z)=k_n^{-2}\delta_{nm},
	\end{align*}
where $k_n^{-1}=\lVert \Phi_{n}\rVert_{\mu}$,  where $\lVert\cdot\rVert_\mu$ represents the $L^2(\partial\mathbb{D},d\mu)$ norm.  Through the use of a suitable sequence $\{\alpha_n\}$, where each $\alpha_n$ belongs to the unit disk $\mathbb{D}$, we can recursively determine the monic orthogonal polynomials $\Phi_{n}$. This process is achieved using the Szeg\"{o} recurrence relations:
\begin{align}\label{Szego recursion}
\nonumber	\Phi_{n}(z) &= z\Phi_{n-1}(z) - \bar{\alpha}_{n-1}\Phi_{n-1}^*(z),\\
	\Phi_{n}^*(z) &= \Phi_{n-1}^*(z) - \alpha_{n-1}z\Phi_{n-1}(z),
\end{align}
where the initial conditions are set as $\Phi_{0}(z) = 1$, $\Phi_{-1}(z) = 0$, and $\Phi_{n}^*(z) = z^n\overline{\Phi_{n}(\frac{1}{\bar{z}})}$ are known as reversed polynomial. Verblunsky theorem guarantees the existence of Verblunsky coefficients ${\alpha_{n}}$. It is worth mentioning that these coefficients are sometimes referred to as reflection or Schur parameters\cite{Zhedanov_SIGMA_2020}. The polynomial $\Phi_n(z)$ can be normalized to obtain the orthonormal polynomial denoted by $\phi_n(z)=\frac{\Phi_{n}(z)}{\lVert\Phi_{n}\rVert_\mu}$, where $\lVert\cdot\rVert_\mu$ represents the $L^2(\partial\mathbb{D},d\mu)$ norm.

For several decades, the study of finite linear combinations of orthogonal polynomials on the real line $\{{\mathcal{P}_{n}(x)}\}_{n\geq 1}$ as well as on the unit circle $\{\Phi_n(z)\}_{n\geq 1}$ has been a vibrant area of research. The exploration of linear combinations of two consecutive degrees of orthogonal polynomials on the real line (OPRL) was initially studied by Riesz \cite{Riesz} while proving the Hamburger moment problem. The necessary and sufficient condition for the orthogonality of linear combinations of OPRL has been investigated in \cite{When do linear}. For the Jacobi polynomials $P_n^{(\alpha,\beta
)}(x)$, the behaviour of zeros and interlacing properties of Jacobi polynomials and quasi-Jacobi polynomials are discussed in \cite{Driver_Jordaan_SIGMA_2016}.
In another direction, classical orthogonal polynomials including Jacobi polynomilas are studied in the perspective of exceptional orthogonal polynomials. For example, we refer to \cite{Simanek_Exceptional OP_CMFT_2023} and references therein.
The orthogonality of self-perturbations of  those orthogonal polynomials, which are generated by Christoffel transformation of a measure on the real line, has been studied in \cite{Vikas_Swami_quasi-type kernel}. In the same article, the recovery of the original orthogonal polynomials from quasi-type kernel polynomials and polynomials generated by linear spectral transformations are established. There is a close connection between the quadrature formula and linear combination of OPRL. More specifically, \cite{Peherstorfer_1990_MOC_quasi} provides the sufficient conditions on the coefficients $a_t$ for the polynomial $Q_n(x)=\sum_{t=0}^{k}a_t\mathcal{P}_{n-t}(x)$ to have $n$ distinct zeros within the interval of orthogonality. These conditions ensure that when these zeros are utilized as nodes in an interpolatory quadrature formula, the weights of the quadrature formula remain positive.

In \cite{Branquinho_Paco_1996_Generating}, the linear combination of two elements from a sequence of monic orthogonal polynomials on the unit circle (OPUC) are investigated. They introduced the expression
\begin{align*}
	\tilde{\Phi}_n(z)=\Phi_n(z)-a_n\Phi_{n-1}(z),
\end{align*}
and explored the necessary conditions on the parameters $a_n$ for which the sequence ${\tilde{\Phi}_n(z)}$ becomes orthogonal with respect to some specific measure. Later, in  \cite{Branquinho_Golinskii_Paco-CVandEE-1999}, the authors provided both necessary and sufficient conditions for the orthogonality of ${\tilde{\Phi}_n(z)}$. In the same paper, they also presented a description of the orthogonality measure associated with the new sequence $\{\tilde{\Phi}_n(z)\}$. More generally, the orthogonality of a finite linear combination of orthogonal polynomials on the unit circle with respect to the Bernstein-Szeg{\"o} measure is discussed in \cite{Paco_Peherstorfer_Orthogonality of FLOP_ACM_1996}.
 In \cite{Cacha_Paco_Proc. AMS_1998_CLC of OPS}, the necessary and sufficient conditions are established for a sequence of monic OPUC $\{\Psi_n\}$ such that the convex linear combination of $\{\Phi_n\}$ and $\{\Psi_n\}$ becomes orthogonal with respect to a specific measure.

The CD kernel of the orthonormal polynomials ${\phi_n}$ is defined as:
\begin{align}\label{CD kernel polynomial}
	\mathbb{K}_n(z,w,\mu)=\sum_{k=0}^{n}\overline{\phi_k(w)}\phi_k(z).
\end{align}
A more concise expression for this kernel is known as the Christoffel-Darboux formula \cite[eq. 3.2]{Brac_Finkel_Ranga_Veronese_2018_JAT}. For any $z,w\in \mathbb{C}$, with $z\bar{w}\neq1$, the Christoffel-Darboux kernel can be written as:
\begin{align}
	\mathbb{K}_n(z,w,\mu)=\frac{\overline{\phi_n^*(w)}\phi_n^*(z)-z\bar{w}\overline{\phi_n(w)}\phi_n(z)}{1-z\bar{w}}.
\end{align}
Next, we state the class of measures which we call ``Marcell\'{a}n class". The definition can be found in \cite{Branquinho_Golinskii_Paco-CVandEE-1999}.
\begin{definition}\label{Def. Quasi OPUC}
	 Let $\{\Phi_n\}$  be sequence of monic orthogonal polynomials with respect to a non-trivial positive Borel measure $\mu$ on the unit circle. If there exists a sequence of constants $\{a_n\}$, where $a_n\in\mathbb{C}$  and a measure $\tilde{\mu}$ on the unit circle such that the sequence of polynomials $\{\tilde{\Phi}_n\}$ defined by
	\begin{align}\label{Quasi OPUC eq.}
		\tilde{\Phi}_n(z)=\Phi_n(z)-a_n\Phi_{n-1}(z)
	\end{align}
	forms a monic orthogonal polynomial system with respect to $\tilde{\mu}$. Then we say that the measure $\mu$ belongs to the Marcell\'{a}n class denoted by $\mathcal{M}_2$.
\end{definition}
The primary aim of this manuscript is to explore the theory of quasi-orthogonal polynomials of order one on the unit circle, while also deriving estimates based on Verblunsky coefficients and the sequence of parameters $a_n$ for polynomials within the Marcellán class.
\subsection{Organization}
In Section \ref{sec:LC of OPUC}, given the orthogonal polynomial $\Phi_n$, we consider its quasi-orthogonal polynomial $\tilde{\Phi}_n$ from $\mathcal{M}_2$. We exhibited the expression which alternatively represent $\Phi_n$ as the linear combination of $\tilde{\Phi}_n$ and its reversed polynomial $\tilde{\Phi}^*_n$. This is achieved through a recurrence relation with appropriate variable coefficients. We conduct numerical experiments to observe the behavior of zeros of quasi-orthogonal polynomial of order one on the unit circle and discuss examples belonging to the Marcellán class. The discussion about obtaining the expression for the positive chain sequence in terms of Verblunsky coefficients from Definition \ref{Def. Quasi OPUC} is also presented.  In Section \ref{sec:Lubinsky type inequality}, we provide a detailed description of the Lubinsky inequality involving the measures $\mu$ and $\tilde{\mu}$, for which we prove certain norm inequalities. We define the  kernel-type polynomials and additional inequalities involving $\mu$ and $\tilde{\mu}$ are exhibited for kernel-type polynomials.

	\section{Orthogonal polynomials in the Marcell\'{a}n class}\label{sec:LC of OPUC}

It is clear from the Definition \ref{Def. Quasi OPUC} that $a_n\not\equiv0$, because $a_n\equiv0$ gives the equivalence of  $\tilde{\Phi}_n(z)$ and $\Phi_n(z)$ for each $n\in \mathbb{N}$. In fact, we can say more about the constant $a_n$'s.
The proof provided in \cite[Theorem 4]{Branquinho_Golinskii_Paco-CVandEE-1999}, which relies on the Szeg\"{o} recursion for the polynomial $\tilde{\Phi}_n(z)$, establishes that the constants $a_n\neq0$ hold for $n\geq1$.
The condition of non-zero $a_n$'s plays a crucial role in demonstrating that the polynomial $\Phi_{n}$ cannot be orthogonal to the constant function $1$ with respect to $\tilde{\mu}$.

\begin{proposition}\label{int PhiN non zero wrt tildemu}
	If $\mu$ is in the Marcell\'{a}n class $\mathcal{M}_2$ then, there does not exist any $N\in \mathbb{N}$ such that $\int\Phi_N(z)d\tilde{\mu}(z)=0$.
\end{proposition}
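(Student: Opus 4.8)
The plan is to reduce the statement to a one-term recursion for the scalars $c_N := \int \Phi_N(z)\,d\tilde\mu(z)$ and to show that none of them can vanish. First I would record two elementary facts about the perturbed family. Since $\Phi_{-1}\equiv 0$, the defining relation \eqref{Quasi OPUC eq.} gives $\tilde\Phi_0 = \Phi_0 \equiv 1$; and since $\{\tilde\Phi_n\}$ is a \emph{monic orthogonal} system with respect to $\tilde\mu$, orthogonality against $\tilde\Phi_0 \equiv 1$ yields $\int \tilde\Phi_n\,d\tilde\mu = 0$ for every $n\geq 1$.

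The key step is then to read \eqref{Quasi OPUC eq.} as $\Phi_N = \tilde\Phi_N + a_N\,\Phi_{N-1}$ and integrate both sides against $\tilde\mu$. For $N\geq 1$ the contribution $\int \tilde\Phi_N\,d\tilde\mu$ vanishes by the previous paragraph, leaving the recurrence $c_N = a_N\, c_{N-1}$. Iterating this down to the base index produces the closed form $c_N = a_N a_{N-1}\cdots a_1\, c_0$, so the question of whether any $c_N$ is zero is reduced to controlling the factors on the right.

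It remains to check that all these factors are nonzero. The base value is $c_0 = \int d\tilde\mu = \tilde\mu(\partial\mathbb{D}) > 0$, because $\tilde\mu$ is a non-trivial positive Borel measure and hence has strictly positive total mass. The remaining factors are nonzero precisely by the property recalled just before the statement, namely that $a_n\neq 0$ for all $n\geq 1$ (established through the Szeg\"o recursion for $\tilde\Phi_n$ in \cite{Branquinho_Golinskii_Paco-CVandEE-1999}). Therefore $c_N$ is a finite product of nonzero numbers, giving $c_N\neq 0$ for every $N$, which is the desired conclusion. The argument is short, and its only genuine inputs — the substance on which everything rests — are the nonvanishing of the $a_n$'s and the strict positivity of $\tilde\mu(\partial\mathbb{D})$; once these are in hand the recursion makes the non-vanishing of $\int\Phi_N\,d\tilde\mu$ automatic.
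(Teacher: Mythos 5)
Your proposal is correct and is essentially the paper's argument: both rest on integrating $\Phi_N=\tilde\Phi_N+a_N\Phi_{N-1}$ against $\tilde\mu$, using $\int\tilde\Phi_N\,d\tilde\mu=0$ for $N\geq1$ and $a_N\neq0$ to obtain the recursion $c_N=a_Nc_{N-1}$, with $c_0=\tilde\mu(\partial\mathbb{D})>0$. The only difference is presentational: the paper runs this recursion backwards as a proof by contradiction (if $c_N=0$ then $c_0=0$), whereas you run it forwards to the closed form $c_N=a_N\cdots a_1\,c_0\neq0$, which is the contrapositive of the same argument.
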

\begin{proof}
	Suppose, for contradiction, there exists an $N\in\mathbb{N}$ such that
	\begin{align*}
		\int\Phi_N(z)d\tilde{\mu}(z)=0.
	\end{align*}
Using \eqref{Quasi OPUC eq.} and the fact that $a_n\neq0$ for each $n\geq1$, we can deduce
\begin{align*}
		\int\Phi_{N-1}(z)d\tilde{\mu}(z)=0.
\end{align*}
By applying reverse induction, we establish
 \begin{align*}
 	\int\Phi_{n}(z)d\tilde{\mu}(z)=0 ~~ \text{for each}~~n=1,2,...,N.
 \end{align*}
This eventually leads to the conclusion that $\tilde{\mu}(\partial\mathbb{D})=0$, which is a contradiction.
\end{proof}

In \eqref{Quasi OPUC eq.}, we see that $\tilde{\Phi}_n$ is an nth degree monic polynomial written as a linear combination of known polynomials $\Phi_{n}$ and $\Phi_{n-1}$. Proposition \ref{OPUCintermsquasiopucanditsreverse}, dealt with expressing the orthogonal polynomial $\Phi_{n}$ in terms of  $ \tilde{\Phi}_{n+1}$ and its reverse polynomial, which is useful to give the information that $z=\bar{a}_{n+1}^{-1}$ will not be the zero of $\tilde{\Phi}_{n+1}^*(z)$ and $\Phi_n(z)$ for any $n\in \mathbb{N}$. \begin{proposition}\label{OPUCintermsquasiopucanditsreverse}
	For any positive Borel measure  $\mu$ in the Marcell\'{a}n class $\mathcal{M}_2$, we can have a sequence of constants $\{a_n\}$ such that the following recurrence relations
	\begin{align}\label{reverse recurrence quasiOPUC}
	((z-a_{n+1})(1-\bar{a}_{n+1}z)-\lvert\alpha_n\rvert^2z)\Phi_n(z)=(1-\bar{a}_{n+1}z)\tilde{\Phi}_{n+1}(z)+\bar{\alpha}_n\tilde{\Phi}_{n+1}^*(z)\\
	\nonumber ((z-a_{n+1})(1-\bar{a}_{n+1}z)-\lvert\alpha_n\rvert^2z)\Phi_n^*(z)=\alpha_nz\tilde{\Phi}_{n+1}(z)+(z-a_{n+1})\tilde{\Phi}_{n+1}^*(z)
	\end{align}
hold. Moreover, $a_{n+1}=\frac{\tilde{\Phi}_{n+1}(0)+\bar{\alpha}_n}{\bar{\alpha}_{n-1}}$ for non-zero $\alpha_{n}$'s.
\end{proposition}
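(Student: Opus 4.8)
The plan is to collapse both identities in \eqref{reverse recurrence quasiOPUC} into a single $2\times 2$ linear system relating the pair $(\tilde{\Phi}_{n+1},\tilde{\Phi}_{n+1}^*)$ to the pair $(\Phi_n,\Phi_n^*)$, and then solve it. First I would substitute the Szeg\"o recursion \eqref{Szego recursion} at level $n+1$ into the defining relation \eqref{Quasi OPUC eq.}. Writing $\Phi_{n+1}=z\Phi_n-\bar{\alpha}_n\Phi_n^*$ immediately gives
\begin{align*}
	\tilde{\Phi}_{n+1}(z)=(z-a_{n+1})\Phi_n(z)-\bar{\alpha}_n\Phi_n^*(z).
\end{align*}

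The companion identity requires the reversed version of \eqref{Quasi OPUC eq.}. Applying the conjugate-reciprocal operation $P\mapsto P^*$ to \eqref{Quasi OPUC eq.} and tracking the degree shift (the $(n+1)$-reversal sends $\Phi_n$ to $z\Phi_n^*$ and conjugates $a_{n+1}$) yields $\tilde{\Phi}_{n+1}^*(z)=\Phi_{n+1}^*(z)-\bar{a}_{n+1}z\,\Phi_n^*(z)$; combining this with $\Phi_{n+1}^*=\Phi_n^*-\alpha_n z\Phi_n$ gives
\begin{align*}
	\tilde{\Phi}_{n+1}^*(z)=-\alpha_n z\,\Phi_n(z)+(1-\bar{a}_{n+1}z)\Phi_n^*(z).
\end{align*}
These two relations form the system with coefficient matrix $\bigl(\begin{smallmatrix} z-a_{n+1} & -\bar{\alpha}_n\\ -\alpha_n z & 1-\bar{a}_{n+1}z\end{smallmatrix}\bigr)$, whose determinant is precisely $(z-a_{n+1})(1-\bar{a}_{n+1}z)-\lvert\alpha_n\rvert^2 z$, the common left-hand coefficient in \eqref{reverse recurrence quasiOPUC}. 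To recover $\Phi_n$ I would form the combination $(1-\bar{a}_{n+1}z)\tilde{\Phi}_{n+1}+\bar{\alpha}_n\tilde{\Phi}_{n+1}^*$: the $\Phi_n^*$ contributions cancel and the $\Phi_n$ coefficient collapses exactly to the determinant, producing the first line. Symmetrically, $\alpha_n z\,\tilde{\Phi}_{n+1}+(z-a_{n+1})\tilde{\Phi}_{n+1}^*$ eliminates $\Phi_n$ and returns the determinant times $\Phi_n^*$, giving the second line. Since I multiply through by the determinant rather than divide, both are genuine polynomial identities valid for all $z$.

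For the closing formula I would evaluate the first base relation at $z=0$, obtaining $\tilde{\Phi}_{n+1}(0)=-a_{n+1}\Phi_n(0)-\bar{\alpha}_n\Phi_n^*(0)$. Evaluating \eqref{Szego recursion} itself at $z=0$ supplies the boundary values $\Phi_n(0)=-\bar{\alpha}_{n-1}$ and $\Phi_n^*(0)=1$ (the latter because $\Phi_n$ is monic). Substituting these gives $\tilde{\Phi}_{n+1}(0)=a_{n+1}\bar{\alpha}_{n-1}-\bar{\alpha}_n$, and solving for $a_{n+1}$, which is legitimate exactly when $\alpha_{n-1}\neq0$, yields $a_{n+1}=(\tilde{\Phi}_{n+1}(0)+\bar{\alpha}_n)/\bar{\alpha}_{n-1}$.

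The only genuine care-point I anticipate is the reversed-polynomial bookkeeping, namely verifying $\tilde{\Phi}_{n+1}^*=\Phi_{n+1}^*-\bar{a}_{n+1}z\,\Phi_n^*$: because $\tilde{\Phi}_{n+1}$ has degree $n+1$ while $\Phi_n$ has degree $n$, the reversal attaches an extra factor of $z$ to the $\Phi_n^*$ term (and conjugates $a_{n+1}$), and getting this shift right is what makes the determinant come out in the stated form. Everything after that is elementary linear elimination, so no further obstacle remains.
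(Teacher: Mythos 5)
Your proposal is correct and follows essentially the same route as the paper: the same two base relations $\tilde{\Phi}_{n+1}=(z-a_{n+1})\Phi_n-\bar{\alpha}_n\Phi_n^*$ and $\tilde{\Phi}_{n+1}^*=-\alpha_n z\Phi_n+(1-\bar{a}_{n+1}z)\Phi_n^*$, the same $2\times2$ system with the determinant $(z-a_{n+1})(1-\bar{a}_{n+1}z)-\lvert\alpha_n\rvert^2 z$, and the same evaluation at $z=0$ with $\Phi_n(0)=-\bar{\alpha}_{n-1}$, $\Phi_n^*(0)=1$ for the formula for $a_{n+1}$. The only cosmetic differences are that you eliminate by multiplying through by the determinant where the paper formally inverts the matrix, and you extract $a_{n+1}$ from the base relation rather than from the derived recurrence, both of which are equivalent (your version even slightly cleaner, since it avoids division and pinpoints that only $\alpha_{n-1}\neq0$ is needed).
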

\begin{proof}
	If $\mu \in \mathcal{M}_2$ then by Definition \ref{Def. Quasi OPUC}, there exists a complex sequence $\{a_n\}$ such that the polynomial sequence $\{\tilde{\Phi}_n\}$ is orthogonal with respect to $\tilde{\mu}$. Using the Szeg\"{o} recursion satisfied by $\Phi_{n}$, we can write the expression \eqref{Quasi OPUC eq.} as
	\begin{equation}\label{Szegorec.onquasiOPUC}
		\tilde{\Phi}_{n+1}(z)=(z-a_{n+1})\Phi_{n}(z)-\bar{\alpha}_n\Phi_{n}^*(z).
	\end{equation}
Applying the reverse operation * on both sides of  \eqref{Quasi OPUC eq.}, we get
\begin{equation}{\label{* oper.on Quasi OPUC}}
	\tilde{\Phi}_{n+1}^*(z)=\Phi_{n+1}^*(z)-\bar{a}_{n+1}z\Phi_{n}^*(z).
\end{equation}
Again we can use Szeg\"{o} recursion satisfied by $\Phi_{n}^*$ to write the expression \eqref{* oper.on Quasi OPUC} as
\begin{equation}{\label{Szegorec.on*oper.onquasiOPUC}}
	\tilde{\Phi}_{n+1}^*(z)=-\alpha_{n}z\Phi_{n}(z)+(1-\bar{a}_{n+1}z)\Phi_{n}^*(z).
\end{equation}
We can write \eqref{Szegorec.onquasiOPUC} and \eqref{Szegorec.on*oper.onquasiOPUC} in matrix form as follows:
\[\begin{pmatrix}
	\tilde{\Phi}_{n+1}(z)\\
	\tilde{\Phi}_{n+1}^*(z)
\end{pmatrix}=A(a_{n+1},\alpha_n;z)\begin{pmatrix}
	\Phi_{n}(z)\\
	\Phi_{n}^*(z)
\end{pmatrix},\]
with
\begin{align*}
	A(a_{n+1},\alpha_n;z)=\begin{pmatrix}
		z-a_{n+1} & -\bar{\alpha}_n\\
		-\alpha_nz & 1-\bar{a}_{n+1}z
	\end{pmatrix}.
\end{align*}
The matrix $A(a_{n+1},\alpha_n;z)$ is invertible since $\text{det}(A(a_{n+1},\alpha_n;z))=(z-a_{n+1})(1-\bar{a}_{n+1}z)-\lvert\alpha_{n}\rvert^2z\neq0.$
Hence we have
\[\begin{pmatrix}
\Phi_{n}(z)\\
	\Phi_{n}^*(z)
\end{pmatrix}=\frac{1}{(z-a_{n+1})(1-\bar{a}_{n+1}z)-\lvert\alpha_{n}\rvert^2z}\begin{pmatrix}
1-\bar{a}_{n+1}z & \bar{\alpha}_n\\
\alpha_nz & z-a_{n+1}
\end{pmatrix}\begin{pmatrix}
	\tilde{\Phi}_{n+1}(z)\\
	\tilde{\Phi}_{n+1}^*(z)
\end{pmatrix},\]
which gives the desired recurrence relations. If we substitute $z=0$ in \eqref{* oper.on Quasi OPUC}, then using \cite[Lemma 1.5.1]{SimonOPUC1}, we get $\tilde{\Phi}_{n+1}^*(0)=1$. Hence,  by plugging $z=0$ in \eqref{reverse recurrence quasiOPUC} and the fact that $\alpha_n=-\overline{\Phi_{n+1}(0)}$ gives $a_{n+1}=\frac{\tilde{\Phi}_{n+1}(0)+\bar{\alpha}_n}{\bar{\alpha}_{n-1}}$ for non-zero $\alpha_{n}$'s.
\end{proof}

\begin{corollary}
	If there does not exists any $n\in \mathbb{N}$ such that $\alpha_{n}=0$ then, $z=\bar{a}_{n+1}^{-1}$ is neither a zero of $\tilde{\Phi}_{n+1}^*(z)$ nor a zero  of $
	\Phi_{n}(z)$. Moreover, \begin{equation*}
		\alpha_n=-\frac{\tilde{\Phi}_{n+1}^*\left(\frac{1}{\bar{a}_{n+1}}\right)}{\Phi_{n}\left(\frac{1}{\bar{a}_{n+1}}\right)}\bar{a}_{n+1}.
	\end{equation*}
\end{corollary}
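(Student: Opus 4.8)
The plan is to read off the conclusion directly from the recurrence relations established in Proposition \ref{OPUCintermsquasiopucanditsreverse}, evaluated at the distinguished point $z=\bar{a}_{n+1}^{-1}$. First I would observe that this value is precisely the point where the factor $(1-\bar{a}_{n+1}z)$ vanishes; substituting $z=\bar{a}_{n+1}^{-1}$ into the first relation of \eqref{reverse recurrence quasiOPUC} annihilates the $\tilde{\Phi}_{n+1}(z)$ term on the right-hand side, leaving
\begin{align*}
	\left(\left(\tfrac{1}{\bar{a}_{n+1}}-a_{n+1}\right)\cdot 0 -|\alpha_n|^2\tfrac{1}{\bar{a}_{n+1}}\right)\Phi_n\!\left(\tfrac{1}{\bar{a}_{n+1}}\right)=\bar{\alpha}_n\,\tilde{\Phi}_{n+1}^*\!\left(\tfrac{1}{\bar{a}_{n+1}}\right),
\end{align*}
so that $-|\alpha_n|^2\bar{a}_{n+1}^{-1}\Phi_n(\bar{a}_{n+1}^{-1})=\bar{\alpha}_n\tilde{\Phi}_{n+1}^*(\bar{a}_{n+1}^{-1})$.

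Next I would use this identity to argue that neither factor can vanish, under the standing hypothesis that no $\alpha_n=0$. Since $\alpha_n\neq 0$, the coefficient $-|\alpha_n|^2\bar{a}_{n+1}^{-1}$ is nonzero (note $a_{n+1}\neq 0$ from the remark preceding Proposition \ref{int PhiN non zero wrt tildemu}, so $\bar{a}_{n+1}^{-1}$ is well defined). If $\Phi_n(\bar{a}_{n+1}^{-1})=0$, then the left-hand side is zero, forcing $\bar{\alpha}_n\tilde{\Phi}_{n+1}^*(\bar{a}_{n+1}^{-1})=0$, and since $\bar{\alpha}_n\neq 0$ this gives $\tilde{\Phi}_{n+1}^*(\bar{a}_{n+1}^{-1})=0$ as well. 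Conversely if $\tilde{\Phi}_{n+1}^*(\bar{a}_{n+1}^{-1})=0$ then $\Phi_n(\bar{a}_{n+1}^{-1})=0$. Thus the two vanishing statements are equivalent, and to rule them out it suffices to exclude one of them: I would show $\Phi_n(\bar{a}_{n+1}^{-1})\neq 0$, for instance by recalling that the zeros of the monic orthogonal polynomial $\Phi_n$ with respect to a nontrivial measure on $\partial\mathbb{D}$ all lie strictly inside $\mathbb{D}$, combined with establishing that $\bar{a}_{n+1}^{-1}$ cannot be such an interior point; alternatively one can use the second relation of \eqref{reverse recurrence quasiOPUC} evaluated at the same point together with the normalization $\tilde{\Phi}_{n+1}^*(0)=1$ to derive a contradiction from simultaneous vanishing.

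Having established that both quantities are nonzero, the final displayed formula for $\alpha_n$ follows immediately by dividing the boxed identity: from $-|\alpha_n|^2\bar{a}_{n+1}^{-1}\Phi_n(\bar{a}_{n+1}^{-1})=\bar{\alpha}_n\tilde{\Phi}_{n+1}^*(\bar{a}_{n+1}^{-1})$ and $|\alpha_n|^2=\alpha_n\bar{\alpha}_n$, I would cancel the common factor $\bar{\alpha}_n$ (legitimate since $\alpha_n\neq 0$) to obtain $-\alpha_n\bar{a}_{n+1}^{-1}\Phi_n(\bar{a}_{n+1}^{-1})=\tilde{\Phi}_{n+1}^*(\bar{a}_{n+1}^{-1})$, and then solve for $\alpha_n$, which rearranges exactly to the claimed expression $\alpha_n=-\bar{a}_{n+1}\,\tilde{\Phi}_{n+1}^*(\bar{a}_{n+1}^{-1})/\Phi_n(\bar{a}_{n+1}^{-1})$.

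I expect the main obstacle to be the nonvanishing step rather than the algebra: the equivalence of the two vanishing conditions is immediate, but cleanly justifying that $z=\bar{a}_{n+1}^{-1}$ is genuinely excluded as a common zero requires either invoking the location of zeros of $\Phi_n$ inside the open unit disk together with control on $|a_{n+1}|$, or exploiting both rows of the matrix relation simultaneously to reach a contradiction with the normalizations $\tilde{\Phi}_{n+1}^*(0)=1$ and the leading coefficients. The cleanest route is likely the latter, treating the pair of relations as a linear system and observing that a common zero at $\bar{a}_{n+1}^{-1}$ would be incompatible with $\Phi_n$ and $\tilde{\Phi}_{n+1}^*$ not sharing all their structure.
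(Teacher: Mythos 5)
Your substitution step and the closing algebra coincide exactly with the paper's proof: setting $z=\bar a_{n+1}^{-1}$ in the first relation of \eqref{reverse recurrence quasiOPUC} kills the $\tilde\Phi_{n+1}$ term and yields $\bar\alpha_n\left(\alpha_n\bar a_{n+1}^{-1}\Phi_n\left(\bar a_{n+1}^{-1}\right)+\tilde\Phi^*_{n+1}\left(\bar a_{n+1}^{-1}\right)\right)=0$, from which the displayed formula for $\alpha_n$ follows once both values are known to be nonzero. The genuine gap is exactly where you predicted it: you never rule out simultaneous vanishing, and neither of your two suggested routes can close this. The ``linear system'' route fails in principle, not just in execution: at $z_0=\bar a_{n+1}^{-1}$ the relation \eqref{Szegorec.on*oper.onquasiOPUC} reads $\tilde\Phi^*_{n+1}(z_0)=-\alpha_n z_0\Phi_n(z_0)$, so the system is perfectly consistent with $\Phi_n(z_0)=\tilde\Phi^*_{n+1}(z_0)=0$; under that assumption the second row of \eqref{reverse recurrence quasiOPUC} only gives $\tilde\Phi_{n+1}(z_0)=-\bar\alpha_n\Phi_n^*(z_0)$, which agrees with \eqref{Szegorec.onquasiOPUC} and produces no contradiction, while the normalization $\tilde\Phi^*_{n+1}(0)=1$ says nothing about the value at $z_0$. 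Your other route, via the zeros of $\Phi_n$ lying in $\mathbb{D}$ plus ``control on $|a_{n+1}|$'', would require $|a_{n+1}|\leq 1$, which is established nowhere in the paper and is in fact unnecessary.

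The missing idea is to use the hypothesis $\mu\in\mathcal{M}_2$ on the \emph{second} polynomial as well: by Definition \ref{Def. Quasi OPUC}, $\tilde\Phi_{n+1}$ is itself a monic orthogonal polynomial with respect to the nontrivial positive measure $\tilde\mu$, so by \cite[Theorem 1.7.1]{SimonOPUC1} all its zeros lie strictly inside $\mathbb{D}$, and consequently all zeros of the reversed polynomial $\tilde\Phi^*_{n+1}$ lie strictly outside $\overline{\mathbb{D}}$. Since all zeros of $\Phi_n$ lie strictly inside $\mathbb{D}$, the two polynomials $\Phi_n$ and $\tilde\Phi^*_{n+1}$ cannot share a zero at any point of $\mathbb{C}$, in particular not at $\bar a_{n+1}^{-1}$; this is the contradiction the paper invokes in its Cases (1) and (2), and it needs no information about where $\bar a_{n+1}^{-1}$ is located. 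With this fact inserted at the nonvanishing step, your argument becomes the paper's proof verbatim.
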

\begin{proof}
If we put $z=\bar{a}_{n+1}^{-1}$ in \eqref{reverse recurrence quasiOPUC}, then we have
\begin{equation*}
\bar{\alpha}_n\left(\Phi_{n}\left(\frac{1}{\bar{a}_{n+1}}\right)\frac{\alpha_{n}}{\bar{a}_{n+1}}+\tilde{\Phi}_{n+1}^*\left(\frac{1}{\bar{a}_{n+1}}\right)\right)=0.
\end{equation*}
If there does not exists any $n\in \mathbb{N}$ such that $\alpha_{n}=0$ then
\begin{equation*}
	\Phi_{n}\left(\frac{1}{\bar{a}_{n+1}}\right)\frac{\alpha_{n}}{\bar{a}_{n+1}}+\tilde{\Phi}_{n+1}^*\left(\frac{1}{\bar{a}_{n+1}}\right)=0.
\end{equation*}
Three cases arise:
\begin{enumerate}
	\item If $\Phi_{n}\left(\frac{1}{\bar{a}_{n+1}}\right)=0$ then, $\tilde{\Phi}_{n+1}^*\left(\frac{1}{\bar{a}_{n+1}}\right)=0$, which is a contradiction.
	\item If $\tilde{\Phi}_{n+1}^*\left(\frac{1}{\bar{a}_{n+1}}\right)=0$ then, $\Phi_{n}\left(\frac{1}{\bar{a}_{n+1}}\right)=0$, which is a contradiction.
	\item The only possibility is that $\Phi_{n}\left(\frac{1}{\bar{a}_{n+1}}\right)$ and $\tilde{\Phi}_{n+1}^*\left(\frac{1}{\bar{a}_{n+1}}\right)$ are non-zero. Hence
	\begin{equation*}
		\alpha_n=-\frac{\tilde{\Phi}_{n+1}^*\left(\frac{1}{\bar{a}_{n+1}}\right)}{\Phi_{n}\left(\frac{1}{\bar{a}_{n+1}}\right)}\bar{a}_{n+1}.
	\end{equation*}
\end{enumerate}
This completes the proof.
\end{proof}
\begin{example}\label{Lebesgue_M2}
	Let $d\mu(z)=\lvert dz\rvert$ be a Lebesgue measure on the unit circle and the sequence $\{\Phi_{n}\}$ with $\Phi_{n}(z)=z^n$ is an orthogonal polynomials sequence. Then we can find the non-zero complex constants $a_n\equiv a$ such that
	\begin{align}\label{Lebesgue measure not in M2}
		\tilde{\Phi}_n(z)=z^n-a z^{n-1},~~\text{for}~~ n\geq1,
	\end{align}
	constitutes an orthogonal polynomial system with respect to measure $d\tilde{\mu}(z)=\frac{\lvert dz\rvert}{\lvert z-a\rvert^2}$. Indeed,
	\begin{align*}
		\int_{\partial\mathbb{D}}\tilde{\Phi}_n(z)\overline{\tilde{\Phi}_m(z)}d\tilde\mu(z)=\int_{\partial\mathbb{D}}z^{n-m}\lvert z-a\rvert^2\frac{\lvert dz\rvert}{\lvert z-a\rvert^2}=\begin{cases}
			0, & n\neq m\geq1\\
			\neq 0, & n=m\geq1
		\end{cases}.
	\end{align*}
	Since $\{\tilde{\Phi}_n\}$ is a sequence of orthogonal polynomials on the unit circle, then by \rm\cite[Theorem 1.7.1]{SimonOPUC1}, $a_n\equiv a$ lies in the unit disc.
\end{example}
Note that for $a=1$ in \eqref{Lebesgue measure not in M2}, we can recover the polynomial $\Phi_{n}(z)=z^n$ of degree $n$ using \cite[equation 3.9]{Ismail_Ranga_GEVP_OPUC_Linear algebra_2019}.
The method illustrated in the preceding example yields a new measure $\tilde{\mu}$. In the subsequent example, we initiate with the measure constructed in the previous case and explore the measure associated with the linear combination of OPUC.
\begin{example}\label{B-S_M2}
	Let $d\mu(z)=\frac{\lvert dz\rvert}{\lvert z-a\rvert^2}$ be a positive measure on the unit circle and the sequence $\{\Phi_{n}\}$ defined by $\Phi_{n}(z)=z^{n-1}(z-a)$ for $n\geq1$ forms an orthogonal polynomials sequence with respect to $\mu$. Then there exist $a_{n}\equiv b\neq 0$ such that
	\begin{align}\label{first iteration of Lebesgue measure not in M2}
		\tilde{\Phi}_n(z)=z^{n-1}(z-a)-b z^{n-2}(z-a)
	\end{align}
	is an orthogonal polynomial sequence with respect to measure $d\tilde{\mu}(z)=\frac{\lvert dz\rvert}{\lvert z-b \rvert^2\lvert z-a\rvert^2}$. Indeed, for $n,m\geq2$, we have
	\begin{align*}
		\int_{\partial\mathbb{D}}\tilde{\Phi}_n(z)\overline{\tilde{\Phi}_m(z)}d\tilde\mu(z)=\int_{\partial\mathbb{D}}z^{n-m}\lvert z-b \rvert^2\lvert z-a\rvert^2\frac{\lvert dz\rvert}{\lvert z-b \rvert^2\lvert z-a\rvert^2}=\begin{cases}
			0, & n\neq m\\
			\neq 0, & n=m
		\end{cases}.
	\end{align*}
	If $b=-\bar{a}$ then,  $\tilde{\Phi}_n(z)=z^{n-2}(z-a)(z+\bar{a})$ for $n\geq2$ is an orthogonal polynomial sequence with respect to $d\tilde{\mu}(z)=\frac{\lvert dz\rvert}{\lvert z+\bar{a} \rvert^2\lvert z-a\rvert^2}$.
\end{example}
\begin{remark}
	It is worth emphasizing that the sequence of polynomials obtained by differentiating the orthogonal polynomials $\tilde{\Phi}_n$ is not orthogonal. This can be demonstrated through the following argument: the condition for a monic orthogonal polynomial sequence to have its derivative also be an orthogonal polynomial sequence is that the original orthogonal polynomials are generated from the normalized Lebesgue measure. The proof of this assertion can be found in \rm\cite[Theorem 3.3]{Paco_Maroni_OPUC derivate orthogonal NS_1991_Const. app.}
\end{remark}
If $d\mu$ represents the canonical Christoffel transformation \cite{Garza_CT for matrix measure_CMFT_2021} of the Borel measure on the unit circle, given by $d\mu = |z-\gamma|^2d\nu$, where $\gamma\in \mathbb{C}$, and if $\mathbb{K}_{n-1}{(\gamma,\gamma, \nu)}>0$ for $n\geq1$, then there exists a sequence of monic orthogonal polynomials with respect to $\mu$, denoted by $\Phi_n(z;\gamma)$, such that the following relation holds:
\begin{align*}
	\Phi_{n-1}(z;\gamma)=\frac{1}{z-\gamma}\left(\Psi_n(z)-\frac{\Psi_n(\gamma)}{\mathbb{K}_{n-1}{(\gamma,\gamma, \nu)}}\mathbb{K}_{n-1}{(z,\gamma, \nu)}\right),
\end{align*}
where $\Psi_{n}(z)$ represents monic orthogonal polynomials with respect to $\nu$, see \cite[proposition 2.4]{Paco_ST for HTM_JCAM_2007}.\\
Considering a case where $d\nu$ represents a normalized Lebesgue measure and $\gamma=1$, we observe that the Verblunsky coefficients associated with the measure $d\mu=|z-1|^2\frac{d\theta}{2\pi}$ are given by $\alpha_{n}=-\frac{1}{n+2}$ for $n\geq0$. The corresponding monic orthogonal polynomials $\Phi_{n}(z;1)$ can be expressed as:
\begin{align}\label{OP wrt CT measure with gamma 1}
	\Phi_{n-1}(z;1)=\frac{1}{z-1}\left(z^n-\frac{1}{n}\sum_{k=0}^{n-1}z^k\right).
\end{align}
Now we define the quasi-type kernel polynomial  of order one at $\gamma=1$ on the unit circle (see \cite{Vikas_Swami_quasi-type kernel} for real line) as:
\begin{align}\label{Quasi OP wrt CT measure with gamma 1}
\nonumber	\tilde{\Phi}_{n}(z;1,a_n):=\Phi_{n}(z;1)&-a_{n}\Phi_{n-1}(z;1)\\
	&=\frac{1}{z-1}\left(z^{n}(z-a_n)+\frac{a_n}{n}\sum_{k=0}^{n-1}z^k-\frac{1}{n+1}\sum_{k=0}^{n}z^k\right).
\end{align}
Indeed, the sequence ${\tilde{\Phi}_{n}(z;1,a_n)}$ may not necessarily form an orthogonal polynomial sequence for arbitrary parameters $a_n$. Consequently, we observe that for different values of $a_n$, the zeros of ${\tilde{\Phi}_{n}(z;1,a_n)}$ are located outside the unit disc.
\begin{table}[ht]
	\begin{center}
		\resizebox{!}{1.6cm}{\begin{tabular}{|c|c|c|c|c|}
				\hline
				\multicolumn{2}{|c|}{Zeros of $\Phi_n(z;1)$}&\multicolumn{3}{|c|}{Zeros of $\tilde{\Phi}_n(z;1,a_n)$}\\
				\hline
			 $n=5$ &$n=6$&$n=5$, $a_n=\frac{1}{n+1}-i$   &$n=6$, $a_n=-1.16$&$n=5, a_n=\frac{n}{n+1}$\\
				\hline
				0.294195-0.668367i&0.410684-0.639889i&0.303024-0.987019i&-1.07313&0.0\\
				\hline
				0.294195+0.668367i&0.410684+0.639889i&-0.113232-0.862069i&-0.941198&0.0\\
				\hline
				-0.375695-0.570175i&-0.205144-0.683797i&0.204197-0.629154i&0.348803-0.674876i&0.0\\
				\hline
					-0.375695+0.570175i&-0.205144+0.683797i&-0.617454-0.213182i&0.348803+0.674876i&0.0\\
				\hline
				-0.670332&-0.634112-0.287655i&-0.443202+0.4331161i&-0.350213-0.67387i&0.0\\
				\hline
				-&-0.634112+0.287655i&-&-0.350213+0.67387i&-\\
				\hline
		\end{tabular}}
		\captionof{table}{Zeros of  $\Phi_n(z;1)$ and $\tilde{\Phi}_n(z;1,a_n)$}
		\label{Zeros at gamma 1}
	\end{center}
\end{table}

\begin{figure}[!ht]
	\centering
	\begin{minipage}[c]{0.4\textwidth}
		\centering
		\includegraphics[scale=0.5]{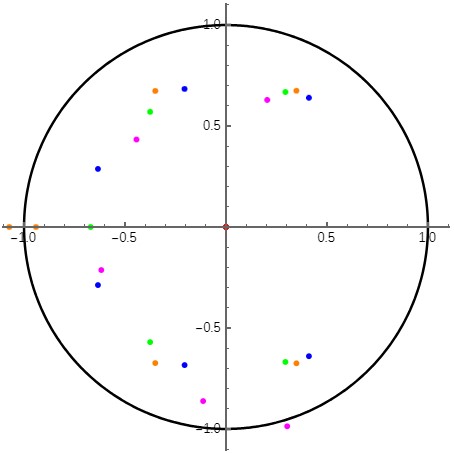}
		\caption{Zeros of $\Phi_5(z;1)$ (Green dots), $\Phi_6(z;1)$ (blue dots),$\tilde{\Phi}_5(z;1, \frac{1}{n+1}-i)$ (magenta dots),\\ $\tilde{\Phi}_5(z;1, 1.16)$ (orange dots) and $\tilde{\Phi}_5(z;1, \frac{n}{n+1})$ (red dots).}
		\label{Location of zeros of QuasiOPUC for 1}
	\end{minipage}
\end{figure}
Table \ref{Zeros at gamma 1} and Figure \ref{Location of zeros of QuasiOPUC for 1} illustrate that the zeros of $\Phi_{n}(z;1)$ are situated inside the unit disc, while at most one zero of $\tilde{\Phi}_n(z;1)$ lies outside the unit disc for various values of $a_n$, specifically when $a_n=\frac{1}{n+1}-i$ and $a_n=-1.16$. However, for $a_n=\frac{n}{n+1}$, the quasi-type kernel polynomial of order one at $\gamma=1$ becomes orthogonal with respect to the normalized Lebesgue measure, and consequently, all the zeros vanish. Therefore, there exists a sequence $a_n=\frac{n}{n+1}$ for which the Christoffel transformed measure of the normalized Lebesgue measure at $\gamma=1$ is contained within the Marcellán class.

When $\gamma=i$, the Verblunsky coefficients are given by $\alpha_n=\frac{(-1)^{n+2}i^{n+1}}{n+2}$, and the corresponding orthogonal polynomial is given by
\begin{align*}
	\Phi_{n-1}(z;i)=\frac{1}{z-i}\left(z^n-\frac{i^n}{n}\sum_{k=0}^{n-1}(-i)^kz^k\right).
\end{align*}
The quasi-type kernel polynomial at $\gamma=i$ of order one on the unit circle is defined as:
\begin{align*}
	\tilde{\Phi}_{n}(z;i)=\frac{1}{z-i}\left(z^{n}(z-a_n)+\frac{a_n i^n}{n}\sum_{k=0}^{n-1}(-i)^kz^k-\frac{i^{n+1}}{n+1}\sum_{k=0}^{n}(-i)^kz^k\right).
\end{align*}

\begin{table}[ht]
	\begin{center}
		\resizebox{!}{1.6cm}{\begin{tabular}{|c|c|c|c|c|}
				\hline
				\multicolumn{2}{|c|}{Zeros of $\Phi_n(z;i)$}&\multicolumn{3}{|c|}{Zeros of $\tilde{\Phi}_n(z;i,a_n)$}\\
				\hline
				$n=4$ &$n=5$&$n=4$, $a_n=\frac{n+1}{n}i$   &$n=5$, $a_n=1.1$&$n=4, a_n=\frac{n}{n+1}i$\\
				\hline
				0.67815+0.13783i&0.668367+0.294195i&1.0616i&1.02185-0.123134i&0.0\\
				\hline
				-0.67815+0.13783i&-0.668367+0.294195i&-0.480542-0.08298i&0.922406+0.178518i&0.0\\
				\hline
				-0.358285-0.53783i&-0.570175-0.375695i&0.480542-0.08298i&0.231037-0.633575i&0.0\\
				\hline
				0.358285-0.53783i&0.570175-0.375695i&-0.445624i&-0.638318+0.201751i&0.0\\
				\hline
				-&-0.670332i&-&-0.436978-0.456894i&-\\
				\hline
		\end{tabular}}
		\captionof{table}{Zeros of  $\Phi_n(z;i)$ and $\tilde{\Phi}_n(z;i,a_n)$}
		\label{Zeros at gamma i}
	\end{center}
\end{table}
\begin{figure}[!ht]
	\centering
	\begin{minipage}[c]{0.5\textwidth}
		\centering
		\includegraphics[scale=0.5]{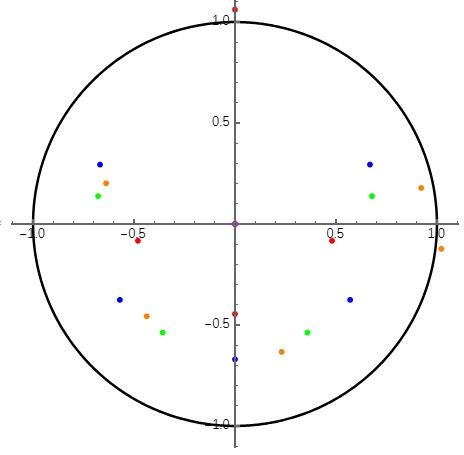}
		\caption{Zeros of $\Phi_4(z;i)$ (Green dots), $\Phi_5(z;i)$ (blue dots),$\tilde{\Phi}_4(z;i, \frac{n+1}{n}i)$ (red dots),\\ $\tilde{\Phi}_5(z;i, 1.1)$ (orange dots) and $\tilde{\Phi}_4(z;1, \frac{n}{n+1}i)$ (magenta dots).}
		\label{Location of zeros of QuasiOPUC for i}
	\end{minipage}
\end{figure}
Table \ref{Zeros at gamma i} and Figure \ref{Location of zeros of QuasiOPUC for i} depict that the zeros of $\Phi_{n}(z;i)$ are located within the unit disc. Note that, for $a_n=\frac{n+1}{n}i$ and $a_n=1.1$, at most one zero of $\tilde{\Phi}_n(z;i,a_n)$ falls outside the measure's support. However, when $a_n=\frac{n}{n+1}i$, the quasi-type kernel polynomial $\tilde{\Phi}_n(z;i,a_n)$ becomes orthogonal with respect to the normalized Lebesgue measure. Therefore, $d\mu=|z-i|^2\frac{d\theta}{2\pi}$ is encompassed within $\mathcal{M}_2$.

By generalizing Example \ref{Lebesgue_M2} and Example \ref{B-S_M2}, we arrive at the general form of measures in the Marcellán class, as outlined in \cite[Theorem 14]{Branquinho_Golinskii_Paco-CVandEE-1999}. This form consists of measures expressed as
\begin{align*}
	d\mu(z)=K\frac{|z-\bar{\beta} |^2}{|z-\chi_1|^2|z-\chi_2|^2} |dz|,
\end{align*}
where $0<|\beta|\leq1$, $\chi_1, \chi_2 \in \mathbb{D}$ and $K>0$. Such measures belong to $\mathcal{M}_2$. Additionally, the measure $d\tilde{\mu}$ is given by:
\begin{align*}
	d\tilde{\mu}(z)=K\frac{1}{|z-\chi_1 |^2|z-\chi_2|^2} |dz|.
\end{align*}



Assuming $\mu\in\mathcal{M}_2$, it follows from \cite[Theorem 4]{Branquinho_Golinskii_Paco-CVandEE-1999} that the sequence $\{\tilde{\Phi}_n\}$, where $\tilde{\Phi}_n(z)=z^{n-2} \tilde{\Phi}_2(z)$ for $n\geq2$, constitutes an orthogonal polynomial sequence with respect to the measure $\tilde{\mu}$. However, we currently lack information about the  polynomial $\tilde{\Phi}_2(z)$.
In the following proposition, we establish that it is possible to express the degree 2 polynomial $\tilde{\Phi}_2(z)$ in terms of lower degrees. We accomplish this by utilizing the Christoffel-Darboux formula.
\begin{proposition}
	If $\mu\in \mathcal{M}_2$ then, for $z\bar{w}\neq1$, we have
	\begin{align*}
		\frac{\overline{\tilde{\phi}^*_2(w)}\tilde{\phi}^*_2(z)-\overline{\tilde{\phi}_2(w)}\tilde{\phi}_2(z)}{1-z\bar{w}}	=\overline{\tilde{\phi_0}(w)}\tilde{\phi}_0(z)+\overline{\tilde{\phi}_1(w)}\tilde{\phi}_1(z).
	\end{align*}
	In particular, for $z=w$, we have
	\begin{align*}
		\lvert\tilde{\phi}^*_2(z)\rvert^2-\lvert\tilde{\phi}_2(z)\rvert^2=(1-\lvert z\rvert^2)\left(\lvert \tilde{\phi}_0(z)\rvert^2+\lvert \tilde{\phi}_1(z)\rvert^2 \right).
	\end{align*}
\end{proposition}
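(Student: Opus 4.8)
The plan is to recognize the claimed identity as the Christoffel--Darboux formula for the measure $\tilde\mu$ written at level $n=2$, after subtracting off the leading term of the kernel sum. Since $\mu\in\mathcal{M}_2$, Definition~\ref{Def. Quasi OPUC} supplies a measure $\tilde\mu$ on $\partial\mathbb{D}$ for which $\{\tilde\Phi_n\}$ is a monic orthogonal polynomial system; normalizing gives the orthonormal polynomials $\tilde\phi_n=\tilde\Phi_n/\lVert\tilde\Phi_n\rVert_{\tilde\mu}$ together with their reversed polynomials $\tilde\phi_n^*$, so that the kernel $\mathbb{K}_n(z,w,\tilde\mu)$ is well defined. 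This is the only place where membership in $\mathcal{M}_2$ enters: it guarantees that $\tilde\mu$, hence $\tilde\phi_0,\tilde\phi_1,\tilde\phi_2$, actually exist.

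First I would invoke the two available descriptions of $\mathbb{K}_2(z,w,\tilde\mu)$. From the summation definition \eqref{CD kernel polynomial},
\[
\mathbb{K}_2(z,w,\tilde\mu)=\overline{\tilde\phi_0(w)}\tilde\phi_0(z)+\overline{\tilde\phi_1(w)}\tilde\phi_1(z)+\overline{\tilde\phi_2(w)}\tilde\phi_2(z),
\]
while the Christoffel--Darboux formula (applied with $\mu$ replaced by $\tilde\mu$) gives, for $z\bar w\neq1$,
\[
\mathbb{K}_2(z,w,\tilde\mu)=\frac{\overline{\tilde\phi_2^*(w)}\tilde\phi_2^*(z)-z\bar w\,\overline{\tilde\phi_2(w)}\tilde\phi_2(z)}{1-z\bar w}.
\]
Equating the two and transferring the $k=2$ summand to the left, the assertion reduces to
\[
\frac{\overline{\tilde\phi_2^*(w)}\tilde\phi_2^*(z)-z\bar w\,\overline{\tilde\phi_2(w)}\tilde\phi_2(z)}{1-z\bar w}-\overline{\tilde\phi_2(w)}\tilde\phi_2(z)=\overline{\tilde\phi_0(w)}\tilde\phi_0(z)+\overline{\tilde\phi_1(w)}\tilde\phi_1(z).
\]

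Next I would place the left-hand side over the common denominator $1-z\bar w$, so that its numerator becomes
\[
\overline{\tilde\phi_2^*(w)}\tilde\phi_2^*(z)-z\bar w\,\overline{\tilde\phi_2(w)}\tilde\phi_2(z)-(1-z\bar w)\,\overline{\tilde\phi_2(w)}\tilde\phi_2(z).
\]
The two occurrences of $z\bar w\,\overline{\tilde\phi_2(w)}\tilde\phi_2(z)$ cancel, leaving $\overline{\tilde\phi_2^*(w)}\tilde\phi_2^*(z)-\overline{\tilde\phi_2(w)}\tilde\phi_2(z)$, which is exactly the stated first identity. For the diagonal case I would set $w=z$ (legitimate since then $z\bar w=\lvert z\rvert^2\neq1$), whence $\overline{\tilde\phi_2^*(z)}\tilde\phi_2^*(z)=\lvert\tilde\phi_2^*(z)\rvert^2$ and likewise for the other products; clearing the factor $1-\lvert z\rvert^2$ then yields the second identity.

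The argument is purely algebraic and presents no genuine obstacle; the only point requiring care is the bookkeeping when clearing the denominator, where one must verify that the $(1-z\bar w)$ multiplying the subtracted term precisely annihilates the $z\bar w$ cross term produced by the Christoffel--Darboux numerator. It is worth emphasizing that, although the stronger structural fact $\tilde\Phi_n(z)=z^{n-2}\tilde\Phi_2(z)$ for $n\ge2$ holds throughout $\mathcal{M}_2$, it is not needed here: the proposition is nothing more than the level-$2$ Christoffel--Darboux relation for $\tilde\mu$ with its leading kernel term removed, and would hold verbatim for any non-trivial measure on the circle.
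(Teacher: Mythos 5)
Your proof is correct, and it takes a genuinely shorter route than the paper's. The paper does not work at level $n=2$: it fixes an arbitrary $n\geq 2$ and invokes the $\mathcal{M}_2$-specific structure $\tilde{\Phi}_j(z)=z^{j-2}\tilde{\Phi}_2(z)$ for $j\geq 2$ (from Theorem 4 of Branquinho--Golinskii--Marcell\'an), which, since multiplication by $z^{j-2}$ preserves the $L^2(\tilde{\mu})$ norm on the circle, gives $\tilde{\phi}_j(z)=z^{j-2}\tilde{\phi}_2(z)$ and $\tilde{\phi}_j^*=\tilde{\phi}_2^*$. This collapses the tail $\sum_{j=2}^{n}\overline{\tilde{\phi}_j(w)}\tilde{\phi}_j(z)$ into the geometric factor $\overline{\tilde{\phi}_2(w)}\tilde{\phi}_2(z)\,\frac{1-(z\bar{w})^{n-1}}{1-z\bar{w}}$ and turns the Christoffel--Darboux side into $\frac{\overline{\tilde{\phi}_2^*(w)}\tilde{\phi}_2^*(z)-(z\bar{w})^{n-1}\overline{\tilde{\phi}_2(w)}\tilde{\phi}_2(z)}{1-z\bar{w}}$; equating the two, the $n$-dependent terms cancel by exactly the same bookkeeping as in your common-denominator step. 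Your observation that membership in $\mathcal{M}_2$ enters only to guarantee the existence of $\tilde{\mu}$ (hence of $\tilde{\phi}_0,\tilde{\phi}_1,\tilde{\phi}_2$) is accurate: the displayed identity is the standard rearrangement of the level-$2$ CD formula, $\mathbb{K}_1(z,w,\nu)=\frac{\overline{\phi_2^*(w)}\phi_2^*(z)-\overline{\phi_2(w)}\phi_2(z)}{1-z\bar{w}}$, valid for every nontrivial measure $\nu$ on the circle. What the paper's longer route buys is a by-product your argument does not produce: an explicit closed form of $\mathbb{K}_n(z,w,\tilde{\mu})$ for \emph{every} $n\geq 2$ in terms of degree-$\leq 2$ data alone, which is genuinely $\mathcal{M}_2$-specific information about how the kernel stabilizes beyond degree $2$. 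Conversely, your proof makes transparent that the proposition as stated carries no content special to the Marcell\'an class.
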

\begin{proof}
	For $n\geq2$, we have
	\begin{align}\label{Kerneloftildemu}
		\nonumber	\mathbb{K}_n(z,w,\tilde{\mu})&=\sum_{j=0}^{n}\overline{\tilde{\phi_j}(w)}\tilde{\phi}_j(z)=\overline{\tilde{\phi_0}(w)}\tilde{\phi}_0(z)+\overline{\tilde{\phi}_1(w)}\tilde{\phi}_1(z)+\sum_{j=2}^{n}\overline{\tilde{\phi_j}(w)}\tilde{\phi}_j(z)\\
		&=\overline{\tilde{\phi_0}(w)}\tilde{\phi}_0(z)+\overline{\tilde{\phi}_1(w)}\tilde{\phi}_1(z)+\overline{\tilde{\phi_2}(w)}\tilde{\phi}_2(z)\left(\frac{1-\bar{w}^{n-1}z^{n-1}}{1-\bar{w}z}\right).
	\end{align}
	We can write the right hand side of Christoffel-Darboux formula as
	\begin{align}\label{RHSofCDoftildemu}
		\frac{\overline{\tilde{\phi}^*_n(w)}\tilde{\phi}^*_n(z)-z\bar{w}\overline{\tilde{\phi}_n(w)}\tilde{\phi}_n(z)}{1-z\bar{w}}=\frac{\overline{\tilde{\phi}^*_2(w)}\tilde{\phi}^*_2(z)-z^{n-1}\bar{w}^{n-1}\overline{\tilde{\phi}_2(w)}\tilde{\phi}_2(z)}{1-z\bar{w}}.
	\end{align}
	By equating \eqref{Kerneloftildemu} and \eqref{RHSofCDoftildemu}  we get the desired result.
\end{proof}
In the following proposition, we establish the absolute continuity of the measure associated with the sequence of orthogonal polynomials.
\begin{proposition}\label{mu tilde is ac}
	If $\{\tilde{\Phi}_n\}$  be a sequence of orthogonal polynomials with respect to $\tilde{\mu}$ as given in Definition \ref{Def. Quasi OPUC}. Then $\tilde{\mu}$ is absolutely continuous with respect to Lebesgue measure and $\tilde{\mu}'(\theta)\in L^2(0, 2\pi)$.
\end{proposition}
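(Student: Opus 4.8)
The plan is to exploit the structural fact recalled just before the statement: for $\mu\in\mathcal{M}_2$ one has $\tilde{\Phi}_n(z)=z^{n-2}\tilde{\Phi}_2(z)$ for all $n\geq2$. This factorization forces the Verblunsky coefficients of $\tilde{\mu}$ to vanish eventually, which places $\tilde{\mu}$ in the Bernstein--Szeg\"{o} class; both the absolute continuity and the $L^2$ bound then follow from the explicit form of such measures. Concretely, I would first translate the factorization into a statement about the Verblunsky coefficients $\{\tilde{\alpha}_n\}$ of $\tilde{\mu}$. For $n\geq3$ the identity $\tilde{\Phi}_n(z)=z\,\tilde{\Phi}_{n-1}(z)$ is immediate from $\tilde{\Phi}_n(z)=z^{n-2}\tilde{\Phi}_2(z)$; comparing it with the Szeg\"{o} recursion $\tilde{\Phi}_n(z)=z\,\tilde{\Phi}_{n-1}(z)-\overline{\tilde{\alpha}_{n-1}}\,\tilde{\Phi}_{n-1}^*(z)$ and using that $\tilde{\Phi}_{n-1}^*$ is not identically zero forces $\tilde{\alpha}_{n-1}=0$ for every $n\geq3$, i.e. $\tilde{\alpha}_n=0$ for all $n\geq2$.

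Next I would invoke the Bernstein--Szeg\"{o} theorem (see \cite{SimonOPUC1}): a measure whose Verblunsky coefficients vanish from an index $N$ onward is purely absolutely continuous, with density
\begin{align*}
	d\tilde{\mu}(\theta)=\frac{1}{2\pi}\frac{d\theta}{\lvert\tilde{\phi}_N(e^{i\theta})\rvert^2},
\end{align*}
where $\tilde{\phi}_N$ is the orthonormal polynomial of degree $N$ and $N=2$ in the present situation. This immediately yields the absolute continuity of $\tilde{\mu}$, with $\tilde{\mu}'(\theta)=\bigl(2\pi\lvert\tilde{\phi}_2(e^{i\theta})\rvert^2\bigr)^{-1}$.

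To obtain the $L^2$ membership I would argue that this density is in fact bounded. Since the zeros of the orthonormal polynomial $\tilde{\phi}_2$ lie in the open unit disc $\mathbb{D}$, the polynomial $\tilde{\phi}_2$ does not vanish on $\partial\mathbb{D}$; hence $\lvert\tilde{\phi}_2(e^{i\theta})\rvert^2$ is continuous and bounded below by a positive constant, making $\tilde{\mu}'$ continuous and bounded. Therefore $\tilde{\mu}'\in L^\infty(0,2\pi)\subset L^2(0,2\pi)$, which gives the second assertion. The only delicate point is the opening step: one must derive the eventual vanishing of the $\tilde{\alpha}_n$ rigorously from the monic factorization and the uniqueness of the Szeg\"{o} recursion, rather than reading it off the explicit examples. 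Once $\tilde{\alpha}_n=0$ for $n\geq2$ is established, the remaining steps are a direct application of the Bernstein--Szeg\"{o} structure theorem together with the standard localization of OPUC zeros inside $\mathbb{D}$.
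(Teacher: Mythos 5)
Your proof is correct, but it takes a genuinely different route from the paper's. Both arguments hinge on the same structural input, namely $\tilde{\Phi}_n(z)=z^{n-2}\tilde{\Phi}_2(z)$ for $n\geq2$; the difference is what is done with it. The paper uses this factorization to note that $\lVert\tilde{\Phi}_n\rVert^2_{\tilde{\mu}}$ is a constant $C$ and that $\tilde{\Phi}^*_n=\tilde{\Phi}^*_2$, so that $\frac{1}{2\pi}\int_0^{2\pi}\lVert\tilde{\Phi}_n\rVert^2_{\tilde{\mu}}\,\lvert\tilde{\Phi}^*_n(e^{i\theta})\rvert^{-4}\,d\theta$ is bounded uniformly in $n$, and then invokes a criterion of Geronimus (Theorem 13.5 of \cite{Geronimus-OPUC and their applications-transl. (1962)}) to conclude absolute continuity and $\tilde{\mu}'\in L^2$. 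You instead convert the factorization into the vanishing of the Verblunsky coefficients, $\tilde{\alpha}_n=0$ for $n\geq2$, and apply the Bernstein--Szeg\"{o} structure theorem to get the explicit density $\tilde{\mu}'(\theta)=\bigl(2\pi\lvert\tilde{\phi}_2(e^{i\theta})\rvert^2\bigr)^{-1}$, which is continuous and strictly positive on $\partial\mathbb{D}$ because the zeros of $\tilde{\phi}_2$ lie in $\mathbb{D}$; hence $\tilde{\mu}'\in L^\infty\subset L^2$. Your route yields a strictly stronger conclusion (an explicit, bounded density, consistent with the form of $\tilde{\mu}$ quoted from \cite[Theorem 14]{Branquinho_Golinskii_Paco-CVandEE-1999}), at the cost of invoking the full Verblunsky bijection/Bernstein--Szeg\"{o} machinery, whereas the paper only needs a one-sided sufficient condition. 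Two small polish points for your write-up: the opening step you flag as delicate can be made immediate by observing $\tilde{\alpha}_{n-1}=-\overline{\tilde{\Phi}_n(0)}=0$ for $n\geq3$, since $\tilde{\Phi}_n(z)=z^{n-2}\tilde{\Phi}_2(z)$ vanishes at the origin; and since Verblunsky's theorem is usually stated for probability measures, you should note that $\tilde{\mu}$ need not be normalized --- this is harmless because your density formula, written with the orthonormal polynomial $\tilde{\phi}_2$ of $\tilde{\mu}$ itself, is scale-covariant, or one can simply normalize $\tilde{\mu}$ first.
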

\begin{proof}
	For $n\geq2$, we have
	\begin{align*}
		\lVert \tilde{\Phi}_n\rVert^2_{\tilde{\mu}}= \int_{\partial\mathbb{D}} 	\lvert \tilde{\Phi}_2(z)\rvert^2 d\tilde{\mu}(z)=C< \infty.
	\end{align*}
Since $\tilde{\Phi}^*_n(z)=\tilde{\Phi}^*_2(z)$, one can obtain
\begin{align*}
\frac{1}{2\pi}\int_{0}^{2\pi} \frac{\lVert \tilde{\Phi}_n\rVert^2_{\tilde{\mu}}}{\lvert\tilde{\Phi}^*_n(e^{i\theta})\rvert^4}d\theta=	\frac{1}{2\pi}\int_{0}^{2\pi} \frac{C}{\lvert\tilde{\Phi}^*_2(e^{i\theta})\rvert^4}d\theta\leq K~~\forall~~n.
\end{align*}
Hence, using \cite[Theorem 13.5]{Geronimus-OPUC and their applications-transl. (1962)}, $\tilde{\mu}$ is absolutely continuous with respect to Lebesgue measure and $\tilde{\mu}'(\theta)\in L^2(0, 2\pi)$.
\end{proof}
\subsection{Para-orthogonal polynomials and Chain Sequences}Gauss hypergeometric functions are pivotal in the study of orthogonal polynomials, as highlighted in \cite{Chihara book}. Furthermore, these hypergeometric functions serve as essential tools in diverse areas, including the exploration of Ramanujan's modular equations, as discussed in \cite{Qiu_Ma_Chu_CMFT_2022}.  The connection between chain sequences and the continued fraction representation of the ratio of Gauss hypergeometric functions is a well-known phenomenon. The integral representation of this ratio can be explored in \cite{Behera_Swami_Missing term_CMFT_2018}, while the linkage between chain sequences and orthogonal polynomials is elaborated in \cite{Chihara book}. Subsequently, we proceed to represent the positive chain sequence in relation to the Verblunsky coefficients ${\tilde{\alpha}_n}$ associated with the measure $\tilde{\mu}$. To achieve this, it is essential to introduce the notion of para-orthogonal polynomials on the unit circle (POPUC).
The POPUC associated with $\tilde{\Phi}_n$ is given by
\begin{align}\label{Para of tildephi}
	\tilde{\Phi}^p_n(z;\zeta):= \tilde{\Phi}_n(z)-\frac{\tilde{\Phi}_n(\zeta)}{\tilde{\Phi}^*_n(\zeta)} \tilde{\Phi}^*_n(z) ~~~\text{for}~~\zeta\in\partial\mathbb{D}.
\end{align}
The CD kernel can be expressed in the framework of the POPUC and this equivalence can be articulated as follows:
\begin{align*}
	\mathbb{K}_n(z,w,\tilde{\mu})=\frac{\overline{\tilde{\Phi}_{n+1}(\zeta)}k_{n+1}^2}{\bar{w}(z-w)}\tilde{\Phi}^p_{n+1}(z;\zeta).
	\end{align*}
The relationship between the POPUC and the CD kernel is extensively explored in \cite{Njastad_1996_CD kernel and para-OPUC}. This connection is instrumental in deriving further insights, as exemplified in \cite{CMV_Para OPUC_JAT_2002}. Notably, this linkage plays a crucial role in understanding the distribution of zeros of para-orthogonal polynomials, as demonstrated in \cite[section 2.14]{B.SimonSzegoDescendants}.

By utilizing \eqref{Para of tildephi}, we consider the sequence $\{\mathcal{L}_n(z;\zeta)\}$ of monic polynomials with deg $\mathcal{L}_n=n$ in the variable $z$ defined as
\begin{align}\label{Monic para of one degree less}
	\mathcal{L}_{n}(z;\zeta)=\frac{1}{1+\frac{\tilde{\Phi}_{n+1}(\zeta)}{\tilde{\Phi}^*_{n+1}(\zeta)}\tilde{\alpha}_{n+1}}\frac{\tilde{\Phi}^p_{n+1}(z;\zeta)}{z-\zeta},
\end{align}
where $\tilde{\alpha}_{n}$'s are Verblunsky coefficients associated with measure $\tilde{\mu}$.  An equivalent formulation of \eqref{Monic para of one degree less} is given by
\begin{align}\label{Monic para of one degree less_equiv form}
	\mathcal{L}_{n}(z;\zeta)=\frac{z\tilde{\Phi}_n(z)-\zeta\frac{\tilde{\Phi}_n(\zeta)}{\tilde{\Phi}^*_n(\zeta)}\tilde{\Phi}^*_n(z)}{z-\zeta}.
\end{align}
Now we have a sequence of polynomials $\{\mathcal{R}_n\}$, which is defined by
\begin{align}\label{poly. OPUC with chain sequence}
	\mathcal{R}_n(z)= \mathcal{T}_{n-1} \mathcal{L}_{n}(z;1),
\end{align}
where
\begin{align*}
	\mathcal{T}_{n-1}=	\frac{\prod\limits_{j=0}^{n-1}\left(1-\frac{\tilde{\Phi}_n(1)}{\tilde{\Phi}^*_n(1)}\tilde{\alpha}_j\right)}{\prod\limits_{j=0}^{n-1}\left(1-Re\left(\frac{\tilde{\Phi}_n(1)}{\tilde{\Phi}^*_n(1)}\tilde{\alpha}_j\right)\right)}.
\end{align*}
Thus, as established in \cite[Theorem 2.2]{Costa_Felix_Ranga_OPUC and chain seq_JAT_2013}, the sequence of polynomials (modified CD kernel) $\{\mathcal{R}_n\}$ satisfies the three-term recurrence relation. Notably, one of the recurrence coefficients in this sequence forms a positive chain sequence. More specifically, the recursive expression for $\{\mathcal{R}_n\}$ is given by
\begin{align}\label{TTRR of modified CD kernel which involves chain sequence}
	\mathcal{R}_{n+1}(z)=[(1+it_{n+1})z+(1-it_{n+1})]\mathcal{R}_{n}(z)-4c_{n+1}z\mathcal{R}_{n-1}(z),
\end{align}
with initial condition $\mathcal{R}_{-1}(z)=0$ and $\mathcal{R}_0(z)=1$ (see also \cite{Shukla_Swami_Malaysian_2023}). Significantly, the sequences $\{t_n\}$ represent real parameters, while $\{c_n\}$ constitutes a positive chain sequence. These parameters are determined by the following expressions:

\begin{align*}
	t_n=\begin{cases}
		-\frac{Im(\alpha_0)}{1-Re(\alpha_0)} & \text{for}~~n=1,\\
		-\frac{Im\left(\frac{1-\bar{\alpha}_0-a_1}{1-\alpha-\bar{a}_1}(\alpha_1-\bar{a}_2\alpha_0)\right)}{1-Re\left(\frac{1-\bar{\alpha}_0-a_1}{1-\alpha_0-\bar{a}_1}(\alpha_1-\bar{a}_2\alpha_0)\right)} & \text{for}~~n=2,\\
		0 & \text{for}~~n\geq3,
	\end{cases}
\end{align*}
and
\begin{align}\label{Chain sequence of M2 class}
	c_{n+1}=\begin{cases}
		\frac{1}{4}\frac{\left(1-\lvert\alpha_0-\bar{a}_1\rvert^2\right)\left|1-\frac{1-\bar{\alpha}_0-a_1}{1-\alpha-\bar{a}_1}(\alpha_1-\bar{a}_2\alpha_0)\right|^2}{\left(1-Re(\alpha_0-\bar{a}_1)\right)\left[1-Re\left(\frac{1-\bar{\alpha}_0-a_1}{1-\alpha_0-\bar{a}_1}(\alpha_1-\bar{a}_2\alpha_0)\right)\right]} & \text{for}~~n=1,\\
		\frac{1}{4}\frac{1-\lvert\alpha_1-\bar{a}_2\alpha_0\rvert^2}{\left[1-Re\left(\frac{1-\bar{\alpha}_0-a_1}{1-\alpha_0-\bar{a}_1}(\alpha_1-\bar{a}_2\alpha_0)\right)\right]} & \text{for}~~n=2,\\
		\frac{1}{4} & \text{for}~~n\geq3.
	\end{cases}
\end{align}
The positive chain sequence $\{c_n\}_{n\geq1}$ can be expressed as $c_n=(1-g_{n-1})g_n$ for $n\geq1$, wherein the sequence $\{g_n\}_{n\geq0}$ is referred to as the parameter sequence associated with $c_n$ \cite{Behera_Ranga_Swami_SIGMA_2016}. This parameter sequence is given by
\begin{align*}
	g_n=\begin{cases}
		\frac{1}{2}\frac{\lvert1-\alpha_0\rvert^2}{1-Re(\alpha_0)} & \text{for}~~ n=0,\\
		\frac{1}{2}\frac{\left|1-\frac{1-\bar{\alpha}_0-a_1}{1-\alpha-\bar{a}_1}(\alpha_1-\bar{a}_2\alpha_0)\right|^2}{\left[1-Re\left(\frac{1-\bar{\alpha}_0-a_1}{1-\alpha_0-\bar{a}_1}(\alpha_1-\bar{a}_2\alpha_0)\right)\right]} & \text{for}~~n=1,\\
		\frac{1}{2} & \text{for}~~n\geq 2.
	\end{cases}
\end{align*}
In addition,  for $n\geq2$ and $\zeta=1$, we can write \eqref{Monic para of one degree less_equiv form} as
\begin{align*}
	\mathcal{L}_{n}(z;1)=\frac{z^{n-1}\tilde{\Phi}_2(z)-\frac{\tilde{\Phi}_2(1)}{\tilde{\Phi}^*_2(1)}\tilde{\Phi}^*_2(z)}{z-1}.
\end{align*}
For $\lvert z\rvert<1$, we have
\begin{align*}
	\tilde{\Phi}^*_2(z)=\lim\limits_{n\rightarrow\infty}\frac{(1-z)\tilde{\Phi}^*_2(1)}{\tilde{\Phi}_2(1)}\mathcal{L}_{n}(z;1).
\end{align*}
Thus,
\begin{align*}
	\lim\limits_{n\rightarrow\infty}\mathcal{R}_n(z)=\frac{1-\alpha_0}{1-Re(\alpha_0)}\frac{\left(1-\frac{1-\bar{\alpha}_0-a_1}{1-\alpha-\bar{a}_1}(\alpha_1-\bar{a}_2\alpha_0)\right)}{\left[1-Re\left(\frac{1-\bar{\alpha}_0-a_1}{1-\alpha_0-\bar{a}_1}(\alpha_1-\bar{a}_2\alpha_0)\right)\right]}\frac{\tilde{\Phi}_2(1)}{(1-z)\tilde{\Phi}^*_2(1)}\tilde{\Phi}^*_2(z).
\end{align*}
Note that for $t_n=0$ and $c_n=\frac{1}{4}$, \eqref{TTRR of modified CD kernel which involves chain sequence} gives
\begin{align}
	\mathcal{R}_{n+1}(z) = (z + 1)\mathcal{R}_{n}(z) - z\mathcal{R}_{n-1}(z).
\end{align}
The polynomial corresponding to this recurrence relation is given by
\begin{align}\label{Rational_Mod_POPUC_1}
	\mathcal{R}_n(z) = \frac{z^{n+1} - 1}{z - 1}.
\end{align}
The monic Christoffel polynomial of degree $n$ defined in \eqref{OP wrt CT measure with gamma 1} can be written in terms of the derivative of $\mathcal{R}_n(z)$ as:
\begin{align}
	\Phi_n(z;1) = \frac{1}{n+1} \frac{d}{dz}(z\mathcal{R}_{n}(z)) = \frac{(n+1)z^n + nz^{n-1} + \ldots + 2z + 1}{n+1}.
\end{align}
Additionally, the orthogonal polynomial corresponding to the Lebesgue measure on the unit circle can be expressed in terms of $\mathcal{R}_n(z)$ as follows:
\begin{align*}
	\Phi_n(z) = z^n = \mathcal{R}_{n}(z) - \mathcal{R}_{n-1}(z).
\end{align*}
 The corresponding POPUC is given by
\begin{align*}
	\Phi^p_n(z; \zeta) = z^n - \zeta^n \quad \text{for} \quad \zeta \in \partial \mathbb{D}.
\end{align*}
The polynomial $\Phi^p_n(z; \zeta)$, which has degree $n$, is orthogonal to the set $\{z, z^2, \ldots, z^{n-1}\}$ with respect to the Lebesgue measure, and all the zeros of $\Phi^p_n(z; \zeta)$ lie on the unit circle. Specifically, for $\zeta = 1$, we have
\begin{align}\label{POPUC Lebesgue for zeta1}
	\Phi^p_n(z; 1) = z^n - 1.
\end{align}
Upon taking the linear combination of $\Phi^p_n(z;1)$ and $\Phi^p_{n-1}(z;1)$, we obtain
\begin{align}\label{POPUC LC gamman}
	\Phi^p_n(z;1;\gamma_n) := \Phi^p_n(z;1) - \gamma_n \Phi^p_{n-1}(z;1) = z^n - 1 - \gamma_n (z^{n-1} - 1).
\end{align}
The polynomial $\Phi^p_n(z;1;\gamma_n)$, of degree $n$, is orthogonal to the set $\{z, z^2, \ldots, z^{n-2}\}$ with respect to the Lebesgue measure. It may be noted that, we are not calling \eqref{POPUC LC gamman} as quasi POPUC, because in the literature \cite{Bultheel_Quasi_POPUC_JCAM_2022}, this terminology is used for another sequence of polynomials. Interestingly, the quasi POPUC of order one given in \cite{Bultheel_Quasi_POPUC_JCAM_2022} coincides with the POPUC.
This same article also discusses higher orders of quasi POPUC and the properties of their zeros.

Interestingly, no longer, all the zeros of $\Phi^p_n(z;1;\gamma_n)$ lie on the unit circle. Particularly, when $\gamma_n = \gamma\in (0, 1)$, \eqref{POPUC LC gamman} becomes
\begin{align}\label{POPUC LC gammain01}
	\Phi^p_n(z;1;\gamma) := z^n - \gamma z^{n-1} + \gamma - 1.
\end{align}
It is evident that the zeros of \eqref{POPUC Lebesgue for zeta1} lie on the unit circle. However, when we perturb the polynomial to obtain \eqref{POPUC LC gamman}, with one zero at $z=1$ and the others depending on the parameters $\gamma_n$, interesting patterns emerge. For instance, setting $\gamma_n=\gamma\in(0,1)$, the zeros of \eqref{POPUC LC gammain01}, except for $z=1$, reside within the unit disk. This behavior is illustrated for $n=6$, $t=0.9$ and $n=7$, $t=0.2$ in the  Table \ref{Table_Zeros_of_POPUC_LCPOPUC} and figures \ref{Zeros_LC_POPUC_app._origin} and \ref{Zeros_LC_POPUC_app._circle}.


 On the other hand, if $\gamma>1$, at most one zero of \eqref{POPUC LC gamman} extends beyond the unit disk, as detailed in the Table \ref{Table_Zeros_of_POPUC_LCPOPUC}. Additionally, as $\gamma$ approaches 1 from within $(0,1)$, the zeros of \eqref{POPUC LC gammain01} tend towards the origin, while as $\gamma$ approaches 0, they move closer to the unit circle. The rational modification of \eqref{POPUC Lebesgue for zeta1} given by \eqref{Rational_Mod_POPUC_1} exhibits a distinct behavior, with all its zeros lying on the boundary of the unit disk. Furthermore, figures \ref{Zeros_LC_POPUC_app._origin} and \ref{Zeros_LC_POPUC_app._circle} depict an intriguing alternation in the placement of zeros between \eqref{Rational_Mod_POPUC_1} and \eqref{POPUC Lebesgue for zeta1} along the unit circle.

\begin{table}[ht]
	\begin{center}
		\resizebox{!}{1.6cm}{\begin{tabular}{|c|c|c|c|}
				\hline
				\multicolumn{4}{|c|}{Zeros of $\Phi^p_n(z;1;\gamma)$}\\
				\hline
				$n=6$, $t=0.9$ &$n=7$, $t=0.2$ &$n=6$, $t=2$   &$n=7$, $t=9.1$\\
				\hline
				-0.583128&-0.846258-0.419346i&-0.678351-0.458536i&-0.964456\\
				\hline
				-0.237066-0.553379i&-0.846258+0.419346i&-0.678351+0.458536i&-0.49786-0.834243i\\
				\hline
					-0.237066+0.553379i&-0.187738-0.942i&0.195377-0.848854i&-0.49786+0.834243i\\
				\hline
				0.47863-0.494043i&-0.187738+0.942i&0.195377+0.848854i&0.480095-0.864496i\\
				\hline
				0.47863+0.494043i&0.633997-0.755073i&1&0.480095+0.864496i\\
				\hline
				1&0.633997+0.755073i&1.96595&1\\
				\hline
				-&1&-&9.0999\\
				\hline
		\end{tabular}}
		\captionof{table}{Zeros of $\Phi^p_n(z;1;\gamma)$}
		\label{Table_Zeros_of_POPUC_LCPOPUC}
	\end{center}
\end{table}
\begin{minipage}{1.0\linewidth}
	\centering
	\begin{minipage}{0.44\linewidth}
		\begin{figure}[H]
			\includegraphics[width=\linewidth]{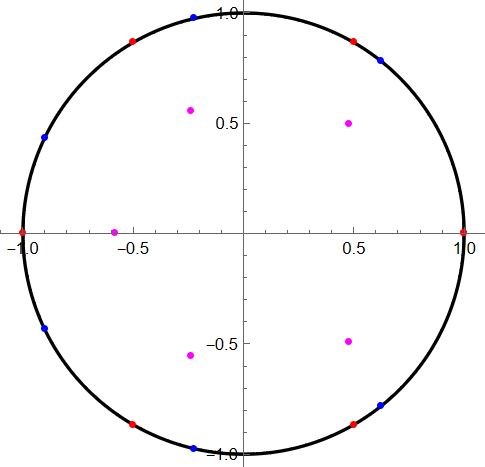}
			\captionof{figure}{Zeros of $\Phi^p_6(z;1;0.9)$ (Magenta dots), $\Phi^p_6(z;1)$ (Red dots), $\mathcal{R}_6(z)$ (Blue dots)}
			\label{Zeros_LC_POPUC_app._origin}
		\end{figure}
	\end{minipage}
	\hspace{0.08\linewidth}
	\begin{minipage}{0.44\linewidth}
		\begin{figure}[H]
			\includegraphics[width=\linewidth]{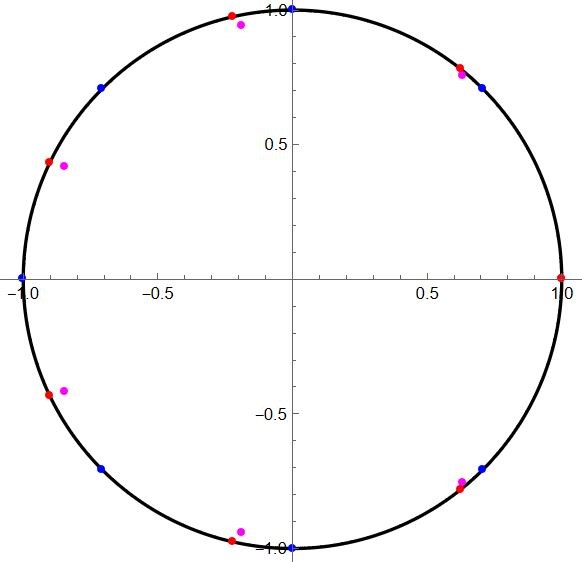}
			\captionof{figure}{Zeros of $\Phi^p_7(z;1;0.2)$ (Magenta dots), $\Phi^p_7(z;1)$ (Red dots), $\mathcal{R}_7(z)$ (Blue dots)}
			\label{Zeros_LC_POPUC_app._circle}
		\end{figure}
	\end{minipage}
\end{minipage}
\section{Lubinsky type inequalities involving \texorpdfstring{$\mu$}{} and \texorpdfstring{$\tilde{\mu}$}{}} \label{sec:Lubinsky type inequality}
In the previous section, it has been observed that the zeros of the linear combination of POPUC and quasi-orthogonal polynomials may lie outside the support of the measure. This leads to the question of examining the relation between the measure $\mu$ in the Marcell\"{a}n class and its corresponding measure $\tilde{\mu}$ involving the quasi orthogonal polynomials. In this section, we determine several inequalities involving $\mu$ and $\tilde{\mu}$. In particular, we are interesting in finding an inequality similar to Lubinsky inequality \cite{Lubinsky2007annals}, which require one of the following norm inequalities.
\subsection{Norm inequalities}\label{subsec: Involving norm inequalities}
Here, we discuss some norm inequalities involving $\tilde{\Phi}_n$ and $\Phi_{n}$ with respect to different measures.
\begin{theorem}
	Let $\{\Phi_{n}\}$ be a sequence of orthogonal polynomials on the unit circle with respect to the measure $\mu$. The following inequalities hold:
	\begin{enumerate}
		\rm\item Let $\tilde{\Phi}_{n}(z)=\Phi_{n}(z)-a_n\Phi_{n-1}(z)$ be a polynomial of degree exactly equal to n. Then
		\begin{align}\label{quasi OP norm wrt mu bound}
			\lVert\tilde{\Phi}_n\rVert_\mu^2\leq(1+\lvert a_n\rvert^2)m_o(\mu),
		\end{align}
		where $m_o(\mu)=\int_{0}^{2\pi}d\mu(e^{i\theta})$.
		\item If $\mu\in\mathcal{M}_2$ then, $\lVert\Phi_{n}\rVert_{\tilde{\mu}}\geq\tilde{\mu}(\partial\mathbb{D})\prod\limits_{j=1}^{n}\lvert a_j\rvert$.
		\item  If $\mu\in\mathcal{M}_2$ then,  $\lVert\tilde{\Phi}_{n}\rVert_{\tilde{\mu}}^2\leq2(1+\lvert a_n\rvert)^4\lVert\Phi_{n}\rVert_{\tilde{\mu}}^2$.
	\end{enumerate}
\end{theorem}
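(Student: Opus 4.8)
The plan is to prove the three parts separately, with parts (2) and (3) resting on a single Pythagorean identity in $L^2(\tilde{\mu})$.

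For part (1), I would expand the squared norm directly. Writing $\tilde{\Phi}_n=\Phi_n-a_n\Phi_{n-1}$ as in \eqref{Quasi OPUC eq.} and using that $\Phi_n$ is orthogonal to $\Phi_{n-1}$ with respect to $\mu$, the two cross terms cancel and
\[ \lVert\tilde{\Phi}_n\rVert_\mu^2=\lVert\Phi_n\rVert_\mu^2+\lvert a_n\rvert^2\lVert\Phi_{n-1}\rVert_\mu^2. \]
It then suffices to note that the monic norms are non-increasing: the Szeg\"{o} recursion \eqref{Szego recursion} gives $\lVert\Phi_k\rVert_\mu^2=(1-\lvert\alpha_{k-1}\rvert^2)\lVert\Phi_{k-1}\rVert_\mu^2$, whence $\lVert\Phi_k\rVert_\mu^2=m_o(\mu)\prod_{j=0}^{k-1}(1-\lvert\alpha_j\rvert^2)\le m_o(\mu)$ since each factor is at most $1$. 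Bounding both surviving terms by $m_o(\mu)=\lVert\Phi_0\rVert_\mu^2$ yields \eqref{quasi OP norm wrt mu bound}.

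For parts (2) and (3) I would first record the following consequence of $\mu\in\mathcal{M}_2$. Because $\tilde{\Phi}_n$ is orthogonal with respect to $\tilde{\mu}$ to every polynomial of degree $<n$, and $a_n\Phi_{n-1}$ has degree $n-1$, expanding $\Phi_n=\tilde{\Phi}_n+a_n\Phi_{n-1}$ in the $\tilde{\mu}$-inner product forces the cross term to vanish, so that
\[ \lVert\Phi_n\rVert_{\tilde{\mu}}^2=\lVert\tilde{\Phi}_n\rVert_{\tilde{\mu}}^2+\lvert a_n\rvert^2\lVert\Phi_{n-1}\rVert_{\tilde{\mu}}^2. \]
(To justify the vanishing, write $\Phi_{n-1}=\sum_{k<n}b_kz^k$, so $\overline{\Phi_{n-1}}$ is a combination of $\{\bar z^{k}\}_{k<n}$, against each of which $\tilde{\Phi}_n$ integrates to zero under $\tilde{\mu}$.) This single identity drives both remaining estimates. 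For part (2), discarding the nonnegative term $\lVert\tilde{\Phi}_n\rVert_{\tilde{\mu}}^2$ gives $\lVert\Phi_n\rVert_{\tilde{\mu}}^2\ge\lvert a_n\rvert^2\lVert\Phi_{n-1}\rVert_{\tilde{\mu}}^2$; iterating this down to $\lVert\Phi_0\rVert_{\tilde{\mu}}^2=\tilde{\mu}(\partial\mathbb{D})$ produces $\lVert\Phi_n\rVert_{\tilde{\mu}}^2\ge(\prod_{j=1}^n\lvert a_j\rvert^2)\,\tilde{\mu}(\partial\mathbb{D})$, and hence the asserted lower bound after taking a square root (here one must reconcile the $\sqrt{\tilde{\mu}(\partial\mathbb{D})}$ that naturally appears with the stated $\tilde{\mu}(\partial\mathbb{D})$, which agree under the probability normalization $\tilde{\mu}(\partial\mathbb{D})=1$ and, more generally, give the stated weaker bound when $\tilde{\mu}(\partial\mathbb{D})\le 1$). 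For part (3), discarding instead the term $\lvert a_n\rvert^2\lVert\Phi_{n-1}\rVert_{\tilde{\mu}}^2$ gives at once $\lVert\tilde{\Phi}_n\rVert_{\tilde{\mu}}^2\le\lVert\Phi_n\rVert_{\tilde{\mu}}^2$, which already implies the claimed inequality since $2(1+\lvert a_n\rvert)^4\ge 1$; the non-tight constant $2(1+\lvert a_n\rvert)^4$ is what one obtains from the cruder route via the triangle inequality $\lVert\tilde{\Phi}_n\rVert_{\tilde{\mu}}\le\lVert\Phi_n\rVert_{\tilde{\mu}}+\lvert a_n\rvert\lVert\Phi_{n-1}\rVert_{\tilde{\mu}}$ followed by controlling $\lVert\Phi_{n-1}\rVert_{\tilde{\mu}}$ by $\lVert\Phi_n\rVert_{\tilde{\mu}}$.

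The only genuine obstacle is the Pythagorean identity itself: it hinges on the orthogonality of $\tilde{\Phi}_n$ against all lower degrees with respect to $\tilde{\mu}$, which is exactly the hypothesis $\mu\in\mathcal{M}_2$, and on the verification that the $\tilde{\mu}$-pairing of $\tilde{\Phi}_n$ with $\Phi_{n-1}$ really is zero. Once that is in place, everything else is bookkeeping, and in fact the identity shows that the estimates in parts (2) and (3) can be sharpened beyond the stated forms.
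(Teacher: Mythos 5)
Your proposal is correct, and for parts (2) and (3) it takes a genuinely cleaner route than the paper. For part (1) the two arguments are essentially the same expansion-plus-orthogonality; you bound the monic norms via the Szeg\"{o} recursion identity $\lVert\Phi_k\rVert_\mu^2=(1-\lvert\alpha_{k-1}\rvert^2)\lVert\Phi_{k-1}\rVert_\mu^2$, while the paper invokes the minimization property of monic orthogonal polynomials ($\lVert\Phi_k\rVert_\mu^2\le\lVert z^k\rVert_\mu^2=m_o(\mu)$); both are standard and equivalent in effect. The real divergence is in (2) and (3). The paper never writes down your Pythagorean identity
\begin{equation*}
\lVert\Phi_n\rVert_{\tilde{\mu}}^2=\lVert\tilde{\Phi}_n\rVert_{\tilde{\mu}}^2+\lvert a_n\rvert^2\lVert\Phi_{n-1}\rVert_{\tilde{\mu}}^2 ;
\end{equation*}
for (2) it instead solves $\langle\tilde{\Phi}_n,\Phi_{n-1}\rangle_{\tilde{\mu}}=0$ for $a_n$ and applies Cauchy--Schwarz to get $\lvert a_n\rvert\le\lVert\Phi_n\rVert_{\tilde{\mu}}/\lVert\Phi_{n-1}\rVert_{\tilde{\mu}}$ (the same recursion you obtain by discarding $\lVert\tilde{\Phi}_n\rVert_{\tilde{\mu}}^2$), and for (3) it expands $\lVert\tilde{\Phi}_n\rVert_{\tilde{\mu}}^2$ and \emph{estimates} the cross term $2\operatorname{Re}\langle\Phi_n,a_n\Phi_{n-1}\rangle_{\tilde{\mu}}$ by inequalities rather than evaluating it; that is what produces the loose constant $2(1+\lvert a_n\rvert)^4$. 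Your identity evaluates that cross term exactly (it equals $2\lvert a_n\rvert^2\lVert\Phi_{n-1}\rVert_{\tilde{\mu}}^2$, since $\langle\Phi_n,\Phi_{n-1}\rangle_{\tilde{\mu}}=a_n\lVert\Phi_{n-1}\rVert_{\tilde{\mu}}^2$), yielding the sharp bound $\lVert\tilde{\Phi}_n\rVert_{\tilde{\mu}}\le\lVert\Phi_n\rVert_{\tilde{\mu}}$, of which the stated inequality is a trivial weakening; so your approach both unifies (2) and (3) and strengthens (3). Finally, you correctly flag a point the paper glosses over: its own recursion in (2) also terminates at $\lVert\Phi_0\rVert_{\tilde{\mu}}=\sqrt{\tilde{\mu}(\partial\mathbb{D})}$, so the stated lower bound with $\tilde{\mu}(\partial\mathbb{D})$ in place of its square root is only justified when $\tilde{\mu}(\partial\mathbb{D})\le1$ (e.g.\ under probability normalization); your explicit reconciliation of this discrepancy is a point in your proof's favor, not a gap.
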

\begin{proof} Let $\tilde{\Phi}_{n}(z)=\Phi_{n}(z)-a_n\Phi_{n-1}(z)$ be a polynomial of degree n.
	\begin{enumerate}
		\item Using the minimization property of $\Phi_n$ with respect to $\mu$, we get \begin{align*}
			\lVert\tilde{\Phi}_n\rVert_\mu^2&=\int\lvert\tilde{\Phi}_{n}(z)\rvert^2d\mu(z)\\
			&=\int\left(\Phi_{n}(z)-a_n\Phi_{n-1}(z)\right)\left(\overline{\Phi_{n}(z)}-\overline{a_n}\overline{\Phi_{n-1}(z)}\right)d\mu(z)\\
			&=	\lVert\Phi_n\rVert_\mu^2+\lvert a_n\rvert^2\lVert\Phi_{n-1}\rVert_\mu^2\\
			&\leq \lVert z^n\rVert_\mu^2+\lvert a_n\rvert^2\lVert z^{n-1}\rVert_\mu^2\\
			&=(1+\lvert a_n\rvert^2)m_o(\mu),
		\end{align*}	where $m_o(\mu)=\int_{0}^{2\pi}d\mu(e^{i\theta})$.
		\item  If $\mu\in\mathcal{M}_2$, then there exists $a_n\not\equiv0$ such that $\tilde{\Phi}_n$'s are orthogonal with respect to $\tilde{\mu}$. This gives
		\begin{align}\label{bound.an.interms.Phi_tilde}
			\nonumber		& \langle\tilde{\Phi}_n, \Phi_{n-1} \rangle_{\tilde{\mu}}=	\int\tilde{\Phi}_n(z)\overline{\Phi_{n-1}(z)}d\tilde{\mu}(z)=0\\
			\nonumber		\implies &a_n=-\frac{\langle\Phi_{n}, \Phi_{n-1}\rangle_{\tilde{\mu}}}{\lVert\Phi_{n-1}\rVert_{\tilde{\mu}}^2}\\
			\implies &\lvert a_n\rvert\leq\frac{\lVert\Phi_{n}\rVert_{\tilde{\mu}}}{\lVert\Phi_{n-1}\rVert_{\tilde{\mu}}}.
		\end{align}
		Recursively, we obtain
		\begin{align*}
			\frac{1}{\lVert\Phi_{n}\rVert_{\tilde{\mu}}}\leq\frac{1}{\tilde{\mu}(\partial\mathbb{D})\prod\limits_{j=1}^{n}\lvert a_j\rvert}.
		\end{align*}
		\item If $\mu\in\mathcal{M}_2$ then, there exists $a_n\not\equiv0$ such that $\tilde{\Phi}_n$'s are orthogonal with respect to $\tilde{\mu}$. Then
		\begin{align*}
			\lVert\tilde{\Phi}_n\rVert_{\tilde{\mu}}^2&=\int\lvert\tilde{\Phi}_{n}(z)\rvert^2d\tilde{\mu}(z)\\
			&=\int\left(\Phi_{n}(z)-a_n\Phi_{n-1}(z)\right)\left(\overline{\Phi_{n}(z)}-\overline{a_n}\overline{\Phi_{n-1}(z)}\right)d\tilde{\mu}(z)\\
			&=	\lVert\Phi_n\rVert_{\tilde{\mu}^2}^2+\lvert a_n\rvert^2\lVert\Phi_{n-1}\rVert_{\tilde{\mu}^2}^2-2\int Re(\overline{a_n}\overline{\Phi_{n-1}(z)}\Phi_{n}(z))d\tilde{\mu}(z).
		\end{align*}
		This implies
		\begin{flalign*}
			\big|\lVert\tilde{\Phi}_n\rVert_{\tilde{\mu}}^2-\lVert\Phi_n\rVert_{\tilde{\mu}}^2\big| &\leq 2\lvert a_n\rvert^2\lVert\Phi_{n-1}\rVert_{\tilde{\mu}}^2+\lVert\Phi_{n}\rVert_{\tilde{\mu}}^2\\
			&\leq \lVert\Phi_{n}\rVert_{\tilde{\mu}}^2\left(1+2\lvert a_n\rvert^4\right)\hspace{1cm}\text{ by using \eqref{bound.an.interms.Phi_tilde}}.
		\end{flalign*}
			\end{enumerate}
			This completes the proof.
\end{proof}
\begin{example}
We observe that if $a_n=\frac{n}{n+1}$, $n\geq1$ then $d\mu=|z-1|^2\frac{d\theta}{2\pi}\in \mathcal{M}_2$, and the orthogonal polynomials with respect to $d\mu$ are given by \eqref{OP wrt CT measure with gamma 1}. Meanwhile, the quasi-orthogonal polynomial of order one is given by \eqref{Quasi OP wrt CT measure with gamma 1}. Since
	\begin{align*}
		m_o(d\mu)=\int_{0}^{2\pi}|e^{i\theta}-1|^2\frac{d\theta}{2\pi}=2,
	\end{align*}
	we have $\lVert\tilde{\Phi}_n\rVert_\mu^2\leq 2(1+|a_n|^2)$. When $a_n=\frac{n}{n+1}$, this yields $\lVert\tilde{\Phi}_n\rVert_\mu^2\leq 2\left(\frac{1+2(n+1)n}{(n+1)^2}\right)$. Additionally, for $d\tilde{\mu}(\theta)=\frac{d\theta}{2\pi}$, the lower bound of the norm of $\Phi_{n}$ becomes
	\begin{align*}
		\lVert\Phi_{n}\rVert_{\tilde{\mu}}\geq\frac{1}{n+1}.
	\end{align*}
\end{example}
\subsection{Lubinsky type inequality without ordering of the measures}
In \cite{Lubinsky2007annals}, Lubinsky introduced an important tool, which Simon calls Lubinsky inequality \cite{B.SimonSzegoDescendants}, to prove the universality limits, which is essential for giving information about the zeros of para-orthogonal polynomials on the unit circle (POPUC). We have a well-developed theory in the literature \cite{Nevai_Case study Christoffel functions_1986,SimonOPUC1} to study the asymptotic behaviour of the Christoffel function, which makes the Lubinsky inequality ``a powerful tool" since this allows us to control the off-diagonal CD kernel to diagonal CD kernel. If we drop the hypothesis of the Lubinsky inequality (i.e. work with a general pair of measures), then the upper bound for $L^2$-norm of the CD kernel of $\mu_2$ with respect to $\mu_1$ will create a challenge. In our setting of Theorem \ref{LubinskytypeinequalityforM2class}, we drop the hypothesis of the Lubinsky inequality and get the control of the off-diagonal CD kernel from the diagonal CD kernel and the summation involving $a_i$'s.

For clarity, we give the statement of the Lubinsky inequality. The proof of the  same can be found in \cite{B.SimonCDkernel2008} (see also \cite{Lubinsky2007annals}).
\begin{theorem}\label{Lubinsky Inequality}
	Let $\mu_1\leq \mu_2$. Then for any complex constants $z$ and $w$, we have
	\begin{flalign*}
		\lvert\mathbb{K}_n(z,w,\mu_1)-\mathbb{K}_n(z,w,\mu_2)\rvert^2
		\leq\mathbb{K}_n(w,w,\mu_1)\left[\mathbb{K}_n(z,z,\mu_1)-\mathbb{K}_n(z,z,\mu_2)\right].
	\end{flalign*}
\end{theorem}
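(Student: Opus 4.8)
The plan is to prove the (in fact stronger, symmetric) statement that the \emph{difference kernel}
\[
D(z,w):=\mathbb{K}_n(z,w,\mu_1)-\mathbb{K}_n(z,w,\mu_2)
\]
is a positive semidefinite kernel, and then to read off the asserted estimate from its $2\times2$ Gram minor. Throughout I write $\langle f,g\rangle_{\mu_j}=\int_{\partial\mathbb{D}}f\overline{g}\,d\mu_j$ and use the two defining features of the CD kernel on the space $\mathcal{P}_n$ of polynomials of degree at most $n$: the reproducing property $\langle f,\mathbb{K}_n(\cdot,w,\mu_j)\rangle_{\mu_j}=f(w)$ for all $f\in\mathcal{P}_n$, and its diagonal specialisation $\mathbb{K}_n(w,w,\mu_j)=\lVert\mathbb{K}_n(\cdot,w,\mu_j)\rVert_{\mu_j}^2$. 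The hypothesis $\mu_1\le\mu_2$ will enter only through the pointwise comparison $\lVert f\rVert_{\mu_1}\le\lVert f\rVert_{\mu_2}$, valid for every $f\in\mathcal{P}_n$.

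To establish positive semidefiniteness I would fix nodes $z_1,\dots,z_m$ and scalars $c_1,\dots,c_m$ and consider the functional $L(f)=\sum_i c_i f(z_i)$ on $\mathcal{P}_n$. By the reproducing property its Riesz representative with respect to $\langle\cdot,\cdot\rangle_{\mu_j}$ is $u_j=\sum_i \overline{c_i}\,\mathbb{K}_n(\cdot,z_i,\mu_j)$, and a short computation gives $\sum_{i,l}c_i\overline{c_l}\,\mathbb{K}_n(z_i,z_l,\mu_j)=L(u_j)=\lVert u_j\rVert_{\mu_j}^2$. Since $\lVert u_j\rVert_{\mu_j}$ equals the dual norm $\sup_{f\neq0}|L(f)|/\lVert f\rVert_{\mu_j}$, the inequality $\lVert f\rVert_{\mu_1}\le\lVert f\rVert_{\mu_2}$ forces $\lVert u_1\rVert_{\mu_1}^2\ge\lVert u_2\rVert_{\mu_2}^2$. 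Subtracting the two quadratic forms yields $\sum_{i,l}c_i\overline{c_l}\,D(z_i,z_l)\ge0$, i.e. $D$ is positive semidefinite.

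With this in hand the conclusion is immediate. Applying positive semidefiniteness to the two nodes $z,w$ shows that the Hermitian matrix $\begin{pmatrix}D(z,z)&D(z,w)\\ D(w,z)&D(w,w)\end{pmatrix}$ is positive semidefinite (note $D(w,z)=\overline{D(z,w)}$). Hence its diagonal entries are nonnegative, which incidentally recovers the monotonicity $\mathbb{K}_n(z,z,\mu_2)\le\mathbb{K}_n(z,z,\mu_1)$ of the Christoffel function, and its determinant is nonnegative, which is exactly $|D(z,w)|^2\le D(z,z)\,D(w,w)$. Finally, because $\mathbb{K}_n(w,w,\mu_2)\ge0$ we have $D(w,w)\le\mathbb{K}_n(w,w,\mu_1)$, and combining the last two facts gives $|D(z,w)|^2\le\mathbb{K}_n(w,w,\mu_1)\,[\mathbb{K}_n(z,z,\mu_1)-\mathbb{K}_n(z,z,\mu_2)]$, as claimed.

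I expect the positive semidefiniteness step to be the crux, since it is the only place where the ordering $\mu_1\le\mu_2$ is genuinely used; it is cleanest to argue through the variational (dual norm) characterisation of the Riesz representative rather than by manipulating the CD kernels directly. An equivalent and perhaps more transparent route is operator theoretic: writing $\langle f,g\rangle_{\mu_2}=\langle Tf,g\rangle_{\mu_1}$ for the positive self-adjoint $T\ge I$ on $(\mathcal{P}_n,\langle\cdot,\cdot\rangle_{\mu_1})$, one gets $\mathbb{K}_n(\cdot,w,\mu_2)=T^{-1}\mathbb{K}_n(\cdot,w,\mu_1)$, so that $D(z,w)=\langle (I-T^{-1})\mathbb{K}_n(\cdot,w,\mu_1),\mathbb{K}_n(\cdot,z,\mu_1)\rangle_{\mu_1}$ with $0\le I-T^{-1}\le I$; Cauchy--Schwarz for the semi-inner product $\langle(I-T^{-1})\,\cdot\,,\,\cdot\,\rangle_{\mu_1}$ then delivers the symmetric bound $|D(z,w)|^2\le D(z,z)\,D(w,w)$ directly, after which the final step is identical.
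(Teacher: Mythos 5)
Your proof is correct, but it follows a genuinely different route from the one the paper relies on: the paper does not prove this theorem at all, deferring to \cite{B.SimonCDkernel2008} and \cite{Lubinsky2007annals}, and the argument found there (which the paper then reuses almost verbatim to prove its Theorem \ref{LubinskytypeinequalityforM2class}) is the asymmetric one: write $\mathbb{K}_n(z,w,\mu_1)-\mathbb{K}_n(z,w,\mu_2)=\int\left(\mathbb{K}_n(z,s,\mu_1)-\mathbb{K}_n(z,s,\mu_2)\right)\mathbb{K}_n(s,w,\mu_1)\,d\mu_1(s)$ by the reproducing property of $\mu_1$, apply Cauchy--Schwarz in $L^2(\mu_1)$, expand the resulting square, and invoke $\mu_1\leq\mu_2$ exactly once, to bound $\int\lvert\mathbb{K}_n(z,s,\mu_2)\rvert^2d\mu_1(s)\leq\mathbb{K}_n(z,z,\mu_2)$. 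Your route---showing the difference kernel $D$ is positive semidefinite via the dual-norm comparison of Riesz representatives, then taking the $2\times2$ Gram minor---is sound and in fact yields the stronger, symmetric estimate $\lvert D(z,w)\rvert^2\leq D(z,z)\,D(w,w)$, of which the stated inequality is a weakening (since $0\leq D(w,w)\leq\mathbb{K}_n(w,w,\mu_1)$); it also gives monotonicity of the Christoffel function as a byproduct. What the standard argument buys instead is robustness: its only use of the ordering is that single norm bound, which can be replaced by an estimate with an error term when $\mu_1\leq\mu_2$ fails---precisely the maneuver the paper performs for the unordered pair $(\mu,\tilde{\mu})$ in Theorem \ref{LubinskytypeinequalityforM2class}, where the correction $\sum_{j}\tilde{k}_j^2(1+\lvert a_j\rvert^2)m_o(\mu)$ appears. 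Your dual-norm step, by contrast, degrades completely without the ordering, since the comparison $\lVert u_1\rVert_{\mu_1}\geq\lVert u_2\rVert_{\mu_2}$ then has no analogue, so the positive semidefiniteness of $D$ (the crux of your proof) is lost rather than merely perturbed.
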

\begin{lemma}
	Let $\mu \in \mathcal{M}_2$ be a positive Borel measure on the unit circle as defined in Definition \ref{Def. Quasi OPUC}  and $\mathbb{K}_n(z,s,\tilde{\mu})$ denote the CD kernel corresponding to the measure $\tilde{\mu}$ as defined in \eqref{CD kernel polynomial}. Then
	\begin{align}\label{L2norm.ineq.Kernel.of.quasi}
	\lVert\mathbb{K}_n(z,s,\tilde{\mu})\rVert_{\mu}^2\leq m_0(\mu)\mathbb{K}_n(z,z,\tilde{\mu}) \sum_{j=0}^{n}(1+\lvert a_{j}\rvert^2)\tilde{k}_j^{2},
	\end{align}
where $\tilde{k}_n=\lVert\tilde{\Phi}_{n}\rVert_{\tilde{\mu}}^{-1}$ and $m_o(\mu)=\int_{0}^{2\pi}d\mu(e^{i\theta})$.
\end{lemma}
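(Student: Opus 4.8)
The plan is to obtain the bound by a single application of Cauchy--Schwarz to the summation form of the kernel, followed by integration against $\mu$ and an appeal to part (1) of the preceding theorem. First I would fix the reading of the norm: since the right-hand side depends on $z$ only through the diagonal kernel $\mathbb{K}_n(z,z,\tilde{\mu})$, the norm $\lVert\,\cdot\,\rVert_\mu$ must be taken in the variable $s$ with $z$ held fixed, i.e.
\begin{align*}
	\lVert\mathbb{K}_n(z,s,\tilde{\mu})\rVert_\mu^2=\int\lvert\mathbb{K}_n(z,s,\tilde{\mu})\rvert^2\,d\mu(s).
\end{align*}

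Writing the kernel as $\mathbb{K}_n(z,s,\tilde{\mu})=\sum_{k=0}^{n}\overline{\tilde{\phi}_k(s)}\,\tilde{\phi}_k(z)$ and applying the Cauchy--Schwarz inequality to the vectors $\bigl(\tilde{\phi}_k(z)\bigr)_{k=0}^{n}$ and $\bigl(\tilde{\phi}_k(s)\bigr)_{k=0}^{n}$ in $\mathbb{C}^{n+1}$, I would get the pointwise estimate
\begin{align*}
	\lvert\mathbb{K}_n(z,s,\tilde{\mu})\rvert^2\leq\mathbb{K}_n(z,z,\tilde{\mu})\,\mathbb{K}_n(s,s,\tilde{\mu}).
\end{align*}
Integrating this in $s$ against $d\mu$ and pulling the $s$-independent factor $\mathbb{K}_n(z,z,\tilde{\mu})$ out of the integral reduces the problem to bounding $\int\mathbb{K}_n(s,s,\tilde{\mu})\,d\mu(s)$. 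Expanding term by term gives $\int\mathbb{K}_n(s,s,\tilde{\mu})\,d\mu(s)=\sum_{k=0}^{n}\lVert\tilde{\phi}_k\rVert_\mu^2$, and substituting $\tilde{\phi}_k=\tilde{k}_k\tilde{\Phi}_k$ turns each summand into $\tilde{k}_k^2\lVert\tilde{\Phi}_k\rVert_\mu^2$. At this stage the decisive input is the norm bound $\lVert\tilde{\Phi}_k\rVert_\mu^2\leq(1+\lvert a_k\rvert^2)m_0(\mu)$ from part (1) of the theorem, whose substitution produces precisely $m_0(\mu)\sum_{k=0}^{n}(1+\lvert a_k\rvert^2)\tilde{k}_k^2$ and yields the claim.

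The point that requires care is not a genuine obstacle but a conceptual one: the orthonormal polynomials $\tilde{\phi}_k$ are orthonormal with respect to $\tilde{\mu}$, whereas the integration is against the \emph{different} measure $\mu$, so the cross-terms in $\lvert\mathbb{K}_n\rvert^2$ do not vanish and one cannot simplify via orthogonality. This is exactly why the argument must be routed through Cauchy--Schwarz, which needs only the two diagonal sums, combined with the $\mu$-norm estimate of part (1), rather than through any orthogonality relation for $\tilde{\mu}$. A minor boundary check is the $k=0$ term, where $\tilde{\Phi}_0=1$ so that $\lVert\tilde{\Phi}_0\rVert_\mu^2=m_0(\mu)\leq(1+\lvert a_0\rvert^2)m_0(\mu)$ holds trivially.
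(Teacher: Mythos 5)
Your proposal is correct and follows essentially the same route as the paper's proof: Cauchy--Schwarz applied to the summation form of $\mathbb{K}_n(z,s,\tilde{\mu})$, integration in $s$ against $d\mu$ with the diagonal factor $\mathbb{K}_n(z,z,\tilde{\mu})$ pulled out, and then the bound $\lVert\tilde{\Phi}_j\rVert_\mu^2\leq(1+\lvert a_j\rvert^2)m_0(\mu)$ from part (1) of the preceding theorem applied term by term via $\tilde{\phi}_j=\tilde{k}_j\tilde{\Phi}_j$. Your added clarifications (the norm being taken in the variable $s$, and the trivial $j=0$ check) are consistent with, and slightly more explicit than, the paper's argument.
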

\begin{proof} By the Cauchy-Schwarz inequality, we have
	\newline $\displaystyle \lvert\mathbb{K}_n(z,s,\tilde{\mu})\rvert^2\leq\left(\sum_{j=0}^{n}\lvert\tilde{\phi}_j(z)\rvert^2\right)\left(\sum_{j=0}^{n}\lvert\tilde{\phi}_j(s)\rvert^2\right),$\\
	which implies
	\newline $\displaystyle  \int\lvert\mathbb{K}_n(z,s,\tilde{\mu})\rvert^2d\mu(s)\leq\mathbb{K}_n(z,z,\tilde{\mu})\int\mathbb{K}_n(s,s,\tilde{\mu})d\mu(s)$.\\
	Using \eqref{quasi OP norm wrt mu bound} and the norm of $\tilde{\Phi}_{n}$ with respect to $d\mu$, we get
	\begin{flalign*}
		\nonumber	\int\lvert\mathbb{K}_n(z,s,\tilde{\mu})\rvert^2d\mu(s)&\leq\mathbb{K}_n(z,z,\tilde{\mu})\left[\sum_{j=0}^{n}\frac{1}{\lVert\tilde{\Phi}_{j}\rVert_{\tilde{\mu}}^2}\lVert\tilde{\Phi}_{j}\rVert_{\mu}^2\right]\\
		\nonumber	&\leq\mathbb{K}_n(z,z,\tilde{\mu})\sum_{j=0}^{n}\tilde{k}_j^2(1+\lvert a_j\rvert^2)m_o(\mu).
	\end{flalign*}
	This completes the proof.
\end{proof}
\begin{theorem}\label{LubinskytypeinequalityforM2class}
	Let $\mu \in \mathcal{M}_2$ be a positive Borel measure on the unit circle as defined in Definition \ref{Def. Quasi OPUC}. Suppose $\mathbb{K}_n(z,w,\mu)$ and $\mathbb{K}_n(z,w,\tilde{\mu})$ denote the CD kernels corresponding to the measure $\mu$ and $\tilde{\mu}$, respectively as given in \eqref{CD kernel polynomial}.  Then, for any complex number z and w, we have
	\begin{flalign*}
		\lvert\mathbb{K}_n(z,w,\mu)-&\mathbb{K}_n(z,w,\tilde{\mu})\rvert^2
		\\
		&\leq\mathbb{K}_n(w,w,\mu)\left[\mathbb{K}_n(z,z,\mu)-\mathbb{K}_n(z,z,\tilde{\mu})\left(2-\sum_{j=0}^{n}\tilde{k}_j^2(1+\lvert a_j\rvert^2)m_o(\mu)\right)\right],
	\end{flalign*}
where $\tilde{k}_n=\lVert\tilde{\Phi}_{n}\rVert_{\tilde{\mu}}^{-1}$ and $m_o(\mu)=\int_{0}^{2\pi}d\mu(e^{i\theta})$.
\end{theorem}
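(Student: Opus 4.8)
The plan is to replicate the structure of the classical proof of the Lubinsky inequality (Theorem \ref{Lubinsky Inequality}), whose argument rests on the reproducing property of the Christoffel--Darboux kernel together with the Cauchy--Schwarz inequality, and to replace the single place where the ordering $\mu_1\le\mu_2$ is invoked by the Lemma \eqref{L2norm.ineq.Kernel.of.quasi}. Concretely, I would fix $z$ and introduce the polynomial
\begin{align*}
	g(t)=\mathbb{K}_n(t,z,\mu)-\mathbb{K}_n(t,z,\tilde{\mu}),
\end{align*}
which has degree at most $n$ in $t$ and hence lies in the span of the orthonormal polynomials $\{\phi_0,\dots,\phi_n\}$ associated with $\mu$. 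Since $\mathbb{K}_n(\cdot,w,\mu)$ reproduces polynomials of degree at most $n$ in $L^2(\mu)$, one has $g(w)=\langle g,\mathbb{K}_n(\cdot,w,\mu)\rangle_{\mu}$, and the Cauchy--Schwarz inequality gives $\lvert g(w)\rvert^2\le\lVert g\rVert_{\mu}^2\,\lVert\mathbb{K}_n(\cdot,w,\mu)\rVert_{\mu}^2=\lVert g\rVert_{\mu}^2\,\mathbb{K}_n(w,w,\mu)$, using $\lVert\mathbb{K}_n(\cdot,w,\mu)\rVert_{\mu}^2=\mathbb{K}_n(w,w,\mu)$.

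The next step is to compute $\lVert g\rVert_{\mu}^2$ by expanding the inner product into four terms. Writing $A=\mathbb{K}_n(\cdot,z,\mu)$ and $B=\mathbb{K}_n(\cdot,z,\tilde{\mu})$, the reproducing property under $\mu$ evaluates three of them exactly: $\langle A,A\rangle_{\mu}=A(z)=\mathbb{K}_n(z,z,\mu)$ and $\langle A,B\rangle_{\mu}=\langle B,A\rangle_{\mu}=\mathbb{K}_n(z,z,\tilde{\mu})$, the last two being real and equal since $\mathbb{K}_n(z,z,\tilde{\mu})=\sum_k\lvert\tilde{\phi}_k(z)\rvert^2$. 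Hence
\begin{align*}
	\lVert g\rVert_{\mu}^2=\mathbb{K}_n(z,z,\mu)-2\mathbb{K}_n(z,z,\tilde{\mu})+\int\lvert\mathbb{K}_n(t,z,\tilde{\mu})\rvert^2\,d\mu(t).
\end{align*}
The remaining term is precisely $\lVert\mathbb{K}_n(z,\cdot,\tilde{\mu})\rVert_{\mu}^2$ after using the conjugate symmetry $\mathbb{K}_n(t,z,\tilde{\mu})=\overline{\mathbb{K}_n(z,t,\tilde{\mu})}$, so the Lemma \eqref{L2norm.ineq.Kernel.of.quasi} applies and bounds it by $m_o(\mu)\,\mathbb{K}_n(z,z,\tilde{\mu})\sum_{j=0}^{n}(1+\lvert a_j\rvert^2)\tilde{k}_j^2$. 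Substituting this and factoring out $\mathbb{K}_n(z,z,\tilde{\mu})$ yields
\begin{align*}
	\lVert g\rVert_{\mu}^2\le\mathbb{K}_n(z,z,\mu)-\mathbb{K}_n(z,z,\tilde{\mu})\Bigl(2-\sum_{j=0}^{n}\tilde{k}_j^2(1+\lvert a_j\rvert^2)m_o(\mu)\Bigr),
\end{align*}
and combining this with the Cauchy--Schwarz bound, together with $\lvert g(w)\rvert=\lvert\mathbb{K}_n(z,w,\mu)-\mathbb{K}_n(z,w,\tilde{\mu})\rvert$ (again by conjugate symmetry), delivers the stated inequality.

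I expect the only genuine obstacle to be the fourth term $\int\lvert\mathbb{K}_n(t,z,\tilde{\mu})\rvert^2\,d\mu(t)$. In the classical setting the ordering $\mu_1\le\mu_2$ forces this quantity to be at most $\mathbb{K}_n(z,z,\tilde{\mu})$, which is exactly what produces the clean Lubinsky bound with the factor $1$. Here, in the absence of any ordering between $\mu$ and $\tilde{\mu}$, that monotonicity step is unavailable, and the whole point of the preceding Lemma is to supply a substitute estimate at the cost of the extra factor $m_o(\mu)\sum_{j}(1+\lvert a_j\rvert^2)\tilde{k}_j^2$; this is precisely what replaces the classical constant $1$ by the coefficient $\bigl(2-\sum_{j}\tilde{k}_j^2(1+\lvert a_j\rvert^2)m_o(\mu)\bigr)$. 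The remaining steps are routine bookkeeping with the reproducing property and the conjugate symmetry of the kernel.
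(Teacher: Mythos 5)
Your proposal is correct and follows essentially the same route as the paper's proof: both use the reproducing property of $\mathbb{K}_n(\cdot,\cdot,\mu)$ together with Cauchy--Schwarz to reduce the problem to bounding $\int\lvert\mathbb{K}_n(z,s,\mu)-\mathbb{K}_n(z,s,\tilde{\mu})\rvert^2\,d\mu(s)$, expand this into four terms of which three are evaluated exactly by reproducing, and bound the fourth term $\int\lvert\mathbb{K}_n(z,s,\tilde{\mu})\rvert^2\,d\mu(s)$ by the Lemma \eqref{L2norm.ineq.Kernel.of.quasi}. Your formulation via the polynomial $g(t)=\mathbb{K}_n(t,z,\mu)-\mathbb{K}_n(t,z,\tilde{\mu})$ is just a cleaner packaging of the identical argument, including the correct identification of where the ordering hypothesis of the classical Lubinsky inequality is replaced by the lemma.
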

\begin{proof}
By the reproducing property of CD kernel, we can write
\begin{flalign*}
	\mathbb{K}_n(z,w,\mu)-\mathbb{K}_n(z,w,\tilde{\mu})=\int\left(\mathbb{K}_n(z,s,\mu)-\mathbb{K}_n(z,s,\tilde{\mu})\right)\mathbb{K}_n(s,w,\mu)d\mu(s),
\end{flalign*}
and using the Cauchy-Schwarz inequality, the above expression would be
\begin{flalign}\label{Int.of.sq.of.diff}
\nonumber	\lvert\mathbb{K}_n(z,w,\mu)-\mathbb{K}_n(z,w,\tilde{\mu})\rvert^2&\leq\int\lvert\mathbb{K}_n(z,s,\mu)-\mathbb{K}_n(z,s,\tilde{\mu})\rvert^2 d\mu(s)\int \lvert\mathbb{K}_n(s,w,\mu)\rvert ^2d\mu(s)\\
	&=\mathbb{K}_n(w,w,\mu)\underbrace{\left(\int\lvert\mathbb{K}_n(z,s,\mu)-\mathbb{K}_n(z,s,\tilde{\mu})\rvert^2 d\mu(s)\right)}_\text{I}.
\end{flalign}
Firstly, we simplify the bracketed term in \eqref{Int.of.sq.of.diff}.
\begin{flalign*}
	I=\int\mathbb{K}_n(z,s,\mu)\mathbb{K}_n(s,z,\mu)d\mu(s)&-\int\overline{\mathbb{K}_n(z,s,\tilde{\mu})}\mathbb{K}_n(z,s,\mu)d\mu(s)\\
	-\int\mathbb{K}_n(z,s,\tilde{\mu})&\mathbb{K}_n(s,z,\mu)d\mu(s)
+\int\mathbb{K}_n(z,s,\tilde{\mu})\overline{\mathbb{K}_n(z,s,\tilde{\mu})}d\mu(s),
\end{flalign*}
\begin{equation}\label{Simplified.Int.of.sq.of.diff}
\implies	I=\mathbb{K}_n(z,z,\mu)-\mathbb{K}_n(z,z,\tilde{\mu})-\overline{\mathbb{K}_n(z,z,\tilde{\mu})}+\int\mathbb{K}_n(z,s,\tilde{\mu})\overline{\mathbb{K}_n(z,s,\tilde{\mu})}d\mu(s).
\end{equation}
Substituting \eqref{L2norm.ineq.Kernel.of.quasi}, in  \eqref{Simplified.Int.of.sq.of.diff}, we have
\begin{equation}\label{Simplified.ineq.Int.of.sq.of.diff}
	I\leq\mathbb{K}_n(z,z,\mu)-2\mathbb{K}_n(z,z,\tilde{\mu})+\mathbb{K}_n(z,z,\tilde{\mu}) )\sum_{j=0}^{n}\tilde{k}_j^2(1+\lvert a_j\rvert^2)m_o(\mu).
\end{equation}
Hence, by using \eqref{Simplified.ineq.Int.of.sq.of.diff} in \eqref{Int.of.sq.of.diff}, we obtain
\begin{flalign*}
	\lvert\mathbb{K}_n(z,w,\mu)-\mathbb{K}_n(z,w,\tilde{\mu})\rvert^2
	&\leq\mathbb{K}_n(w,w,\mu)\\
	&\left[\mathbb{K}_n(z,z,\mu)-\mathbb{K}_n(z,z,\tilde{\mu})\left(2-\sum_{j=0}^{n}\tilde{k}_j^2(1+\lvert a_j\rvert^2)m_o(\mu)\right)\right].
\end{flalign*}
This completes the proof.
\end{proof}
\begin{remark}
	In one sense, Theorem \ref{LubinskytypeinequalityforM2class} is better than Theorem \ref{Lubinsky Inequality}, as the ordering between the $\mu$ and $\tilde{\mu}$ ($\mu\lessgtr\tilde{\mu}$) is not required. On the other hand, an extra expression
	\begin{align}\label{extra exp Lub.type}
	\left(2-\sum_{j=0}^{n}\tilde{k}_j^2(1+\lvert a_j\rvert^2)m_o(\mu)\right),
	\end{align}
	is presented in Theorem \ref{LubinskytypeinequalityforM2class}, which makes it weaker than Theorem \ref{Lubinsky Inequality}. This term can be discarded from the hypothesis to obtain the Lubinsky inequality without ordering between $\mu$ and $\tilde{\mu}$, if
	\begin{align}\label{need cond}
		\sum_{j=0}^{n} \frac{(1+\lvert a_j\rvert^2)m_o(\mu)}{\lVert\tilde{\Phi}_j\rVert^2_{\tilde{\mu}}}\leq1,
	\end{align}
	which is NOT easy to prove. This leads to the following question.
\end{remark}
\begin{problem}
	Is it possible to impose certain conditions on the coefficients $a_j$'s in \eqref{Quasi OPUC eq.} such that \eqref{need cond} is true?
\end{problem}
\subsection{Sub-reproducing property}
We observe that $\mathbb{K}_n(z, w, \mu)$ denotes the kernel polynomials for orthonormal polynomials. It is important to note that normalization plays a crucial role in defining kernel polynomials, providing the reproducing property \cite[eq 1.18]{B.SimonCDkernel2008} and CD formula. On the other hand, if we define the kernel polynomials for orthogonal polynomials, it will not yield the reproducing property. Nevertheless, we can still inquire about some estimates related to the reproducing type property for the kernel polynomial of orthogonal polynomials. We define
\begin{align}\label{kernel type polynomials}
	\mathcal{K}_n(z,w,\mu)=\sum_{j=0}^{n}\overline{\Phi_j(w)}\Phi_j(z).
\end{align}
We refer to $\mathcal{K}_n(z,w,\mu)$ as kernel-type polynomials. In the subsequent result, we obtain one-sided inequality for the reproducing property, which is called as sub-reproducing property.
\begin{proposition}[Sub-Reproducing property]
	Let $\mathcal{K}_n(z,w,\mu)$ denote the polynomial of degree $n$ in $z$, as defined in \eqref{kernel type polynomials}. If $p(z)$ is any polynomial of degree at-most $n$ such that $Re\left(\langle p, \Phi_j\rangle\right)\geq0$ for every $j=0,1,...,n$, then for any $w\in\mathbb{C}$ such that $\Phi_j(w)\geq0$ for each $j=0,1,...,n$, we have
\begin{align}\label{kernel type reproducing property}
Re\left(\int \overline{p(z)}\mathcal{K}_n(z,w,\mu)d\mu(z)\right)\leq m_0(\mu)Re(p(w)).
\end{align}
If $\mu$ is a probability measure on the unit circle then
\begin{align}\label{kernel type reproducing property for prob. measure}
	Re\left(\int \overline{p(z)}\mathcal{K}_n(z,w,\mu)d\mu(z)\right)\leq Re(p(w)).
\end{align}
\end{proposition}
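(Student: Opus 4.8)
The plan is to expand the integral on the left directly from the definition \eqref{kernel type polynomials}, collapse it to a single sum over $j$, and then combine the two sign hypotheses with the minimization property of monic orthogonal polynomials already invoked in \eqref{quasi OP norm wrt mu bound}.

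First I would interchange the finite sum with the integral and identify each integral as an inner product. Writing $\langle p,\Phi_j\rangle=\int p(z)\overline{\Phi_j(z)}\,d\mu(z)$, one has $\int\overline{p(z)}\Phi_j(z)\,d\mu(z)=\overline{\langle p,\Phi_j\rangle}$, so that
\begin{align*}
\int \overline{p(z)}\mathcal{K}_n(z,w,\mu)\,d\mu(z)=\sum_{j=0}^{n}\overline{\Phi_j(w)}\,\overline{\langle p,\Phi_j\rangle}.
\end{align*}
Taking real parts and using $Re(\bar{c})=Re(c)$ together with the hypothesis that $\Phi_j(w)\geq0$ is a non-negative real, this becomes $\sum_{j=0}^{n}\Phi_j(w)Re(\langle p,\Phi_j\rangle)$; moreover each summand is non-negative, since $Re(\langle p,\Phi_j\rangle)\geq0$ by hypothesis.

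Next, to introduce $Re(p(w))$, I would expand $p$ in the monic orthogonal basis. Since $\deg p\leq n$, we may write $p(z)=\sum_{j=0}^{n}\frac{\langle p,\Phi_j\rangle}{\lVert\Phi_j\rVert_\mu^2}\Phi_j(z)$; evaluating at $w$ and taking real parts (again $\Phi_j(w)$ is real) gives $Re(p(w))=\sum_{j=0}^{n}\frac{\Phi_j(w)}{\lVert\Phi_j\rVert_\mu^2}Re(\langle p,\Phi_j\rangle)$. Comparing term by term, I would rewrite each summand on the left as $\lVert\Phi_j\rVert_\mu^2\cdot\frac{\Phi_j(w)}{\lVert\Phi_j\rVert_\mu^2}Re(\langle p,\Phi_j\rangle)$ and invoke the minimization property $\lVert\Phi_j\rVert_\mu^2\leq\lVert z^j\rVert_\mu^2=m_0(\mu)$. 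Because each factor $\frac{\Phi_j(w)}{\lVert\Phi_j\rVert_\mu^2}Re(\langle p,\Phi_j\rangle)$ is non-negative, replacing $\lVert\Phi_j\rVert_\mu^2$ by its upper bound $m_0(\mu)$ preserves the inequality, yielding $Re\left(\int\overline{p}\,\mathcal{K}_n\,d\mu\right)\leq m_0(\mu)Re(p(w))$, which is \eqref{kernel type reproducing property}. For a probability measure $m_0(\mu)=1$, so \eqref{kernel type reproducing property for prob. measure} follows immediately.

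The only delicate point — and it is a minor one — is that the estimate is obtained summand by summand rather than globally: it is precisely the joint non-negativity of $\Phi_j(w)$ and $Re(\langle p,\Phi_j\rangle)$ supplied by the two hypotheses that makes each term non-negative, and hence permits replacing $\lVert\Phi_j\rVert_\mu^2$ by the larger $m_0(\mu)$ without reversing the inequality. This one-sidedness is exactly why a sub-reproducing estimate, rather than an equality, emerges; without the sign conditions the term-by-term comparison would fail.
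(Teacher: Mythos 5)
Your proposal is correct and follows essentially the same route as the paper's proof: expand $p$ in the monic orthogonal basis, collapse the double sum by orthogonality to $\sum_{j=0}^{n}\Phi_j(w)\,Re(\langle p,\Phi_j\rangle)$ (up to the normalizing factors), and then use the sign hypotheses together with the minimization property $\lVert\Phi_j\rVert_\mu^2\leq\lVert z^j\rVert_\mu^2=m_0(\mu)$ to compare term by term. Your write-up in fact makes explicit the term-by-term non-negativity that the paper's one-line justification of the final inequality leaves implicit, but the underlying argument is identical.
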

\begin{proof}
	Since $p(z)=\sum_{j=0}^{n}\beta_j\Phi_{j}(z)$. For any $w\in\mathbb{C}$ such that $\Phi_j(w)\geq0$ for each $j=0,1,...,n$. We can write
	 	\begin{align*}
		Re\left(\int \overline{p(z)}\mathcal{K}_n(z,w,\mu)d\mu(z)\right)&=Re\left(\int\sum_{j=0}^{n}\overline{\beta_j}\overline{\Phi_{j}(z)}\sum_{i=0}^{n}\overline{\Phi_i(w)}\Phi_i(z)d\mu(z)\right)\\
		&=\sum_{j=0}^{n}Re(\overline{\beta_j\Phi_j(w))}\lVert\Phi_j\rVert_\mu^2\\
		&\leq m_0(\mu)Re(p(w)),
	\end{align*}
last inequality follows from the minimization property of $\Phi_n$ with respect to $\mu$.
 \end{proof}
\begin{corollary}
	Let $\mathcal{K}_n(z,\eta,\mu)$ denote the polynomial of degree $n$ in $\eta$, as defined in \eqref{kernel type polynomials}. Then for any $z\in \mathbb{C}$, we have
	\begin{align}\label{kernel type reproducing for kernel}
		\int \mathcal{K}_n(z,\eta, \mu)\mathcal{K}_n(\eta,z,\mu)d\mu(\eta)\leq m_0(\mu)\mathcal{K}_n(z,z, \mu),
	\end{align}
where	$m_o(\mu)=\int_{0}^{2\pi}d\mu(e^{i\theta})$.
\end{corollary}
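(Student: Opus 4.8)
The plan is to exploit the conjugate (Hermitian) symmetry of the kernel-type polynomial and then collapse everything to a single orthogonality computation. First I would note that directly from the definition \eqref{kernel type polynomials} one has $\mathcal{K}_n(z,\eta,\mu)=\overline{\mathcal{K}_n(\eta,z,\mu)}$, so that for a fixed $z$ the integrand in \eqref{kernel type reproducing for kernel} is simply $\mathcal{K}_n(z,\eta,\mu)\mathcal{K}_n(\eta,z,\mu)=\lvert\mathcal{K}_n(\eta,z,\mu)\rvert^2\geq 0$. Setting $q(\eta):=\mathcal{K}_n(\eta,z,\mu)=\sum_{i=0}^{n}\overline{\Phi_i(z)}\Phi_i(\eta)$, which is a polynomial of degree $n$ in $\eta$, the left-hand side of \eqref{kernel type reproducing for kernel} becomes $\int\lvert q(\eta)\rvert^2 d\mu(\eta)=\lVert q\rVert_\mu^2$.

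Next I would compute this norm using the orthogonality relation $\int\Phi_i(\eta)\overline{\Phi_j(\eta)}d\mu(\eta)=\lVert\Phi_i\rVert_\mu^2\delta_{ij}$. Expanding $\lvert q(\eta)\rvert^2$ as a double sum over the indices $i,j$ and integrating term by term annihilates all off-diagonal contributions, leaving
\begin{align*}
\int\mathcal{K}_n(z,\eta,\mu)\mathcal{K}_n(\eta,z,\mu)\,d\mu(\eta)=\sum_{i=0}^{n}\lvert\Phi_i(z)\rvert^2\lVert\Phi_i\rVert_\mu^2.
\end{align*}
On the other hand, the right-hand side is $m_0(\mu)\mathcal{K}_n(z,z,\mu)=m_0(\mu)\sum_{i=0}^{n}\lvert\Phi_i(z)\rvert^2$, so the entire claim reduces to establishing the term-by-term inequality $\lVert\Phi_i\rVert_\mu^2\leq m_0(\mu)$ for each $0\leq i\leq n$.

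This last bound is precisely the minimization property of the monic orthogonal polynomials already invoked in the preceding Proposition: since $\Phi_i$ is the monic polynomial of degree $i$ of least $L^2(\mu)$-norm, we have $\lVert\Phi_i\rVert_\mu^2\leq\lVert z^i\rVert_\mu^2=\int_{\partial\mathbb{D}}\lvert z^i\rvert^2\,d\mu=\int_{\partial\mathbb{D}}d\mu=m_0(\mu)$, where I use $\lvert z\rvert=1$ on the unit circle. Multiplying through by $\lvert\Phi_i(z)\rvert^2\geq 0$ and summing over $i$ then delivers \eqref{kernel type reproducing for kernel}. There is no genuine obstacle in this argument; the only point that demands care is tracking which variable is conjugated in each factor, so that the orthogonality relation is applied to the correct pair of indices. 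I would deliberately \emph{not} try to route the proof through the Sub-Reproducing Proposition by taking $p(\eta)=\mathcal{K}_n(\eta,z,\mu)$, since that would force the positivity hypotheses $Re(\langle p,\Phi_j\rangle)\geq 0$ and $\Phi_j(w)\geq 0$ to hold, whereas the direct computation above is valid for every $z\in\mathbb{C}$ without any such restriction.
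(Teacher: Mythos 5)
Your proposal is correct and follows essentially the same route as the paper's own proof: expand the integrand via orthogonality to obtain $\sum_{j=0}^{n}\lvert\Phi_j(z)\rvert^2\lVert\Phi_j\rVert_\mu^2$, then bound each $\lVert\Phi_j\rVert_\mu^2$ by $m_0(\mu)$ using the minimization property of monic orthogonal polynomials on the unit circle. The Hermitian-symmetry observation and the explicit warning about which variable is conjugated are nice touches of rigor, but the underlying computation is identical to the paper's.
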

\begin{proof}
	For any $z\in\mathbb{C}$, we write
	\begin{align*}
		\int \mathcal{K}_n(z,\eta, \mu)\mathcal{K}_n(\eta,z,\mu)d\mu(\eta)&=\sum_{j=0}^{n}\overline{\Phi_j(z)}\Phi_j(z)\lVert\Phi_j\rVert_\mu^2\\
		&\leq m_0(\mu)\mathcal{K}_n(z,z, \mu).
	\end{align*}
This completes the proof.
\end{proof}
In the next theorem, we obtain estimates for the absolute difference of diagonal elements of the kernel-type polynomials with respect to the measures $\mu$ and $\tilde{\mu}$.
\begin{theorem}\label{bound of difference of inverse Christoffel fn of mu and tildemu}
	For $z\in \overline{\mathbb{D}}$ and $\{\alpha_j\}$ representing the Verblunsky coefficients corresponding to the measure $\mu$, we have
	\begin{flalign*}
		\lvert\mathcal{K}_n(z,z,\tilde{\mu})-\mathcal{K}_n(z,z,\mu)\rvert\leq\sum_{j=0}^{n}\left(\left(\lvert \alpha_{j-1}\rvert+1\right)^2+2\lvert a_j\rvert^2\right)exp\left(2\sum_{k=0}^{j-2}\lvert\alpha_k\rvert\right).
	\end{flalign*}
	Moreover, if $\alpha_{n}\neq0$ for $n\geq1$ then
	\begin{flalign}\label{ineq.of.absolute.diff.of.kernel.and.quasi.kernel}
		\lvert\mathcal{K}_n(z,z,\tilde{\mu})-\mathcal{K}_n(z,z,\mu)\rvert\leq M+6\sum_{j=3}^{n}\frac{1}{\lvert\alpha_{j-2}\rvert^2}e^{2j-2},
	\end{flalign}
	where $M=2e^4(6+\sum\limits_{j=0}^{2}\lvert a_j\rvert^2)$ and $a_j$'s given in \eqref{Quasi OPUC eq.}.
\end{theorem}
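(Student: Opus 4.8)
The plan is to prove both estimates termwise, starting from the definition \eqref{kernel type polynomials}, which gives
\begin{align*}
	\mathcal{K}_n(z,z,\tilde{\mu})-\mathcal{K}_n(z,z,\mu)=\sum_{j=0}^{n}\left(\lvert\tilde{\Phi}_j(z)\rvert^2-\lvert\Phi_j(z)\rvert^2\right).
\end{align*}
Using $\tilde{\Phi}_j(z)=\Phi_j(z)-a_j\Phi_{j-1}(z)$ from \eqref{Quasi OPUC eq.} and expanding the square, one finds $\lvert\tilde{\Phi}_j(z)\rvert^2-\lvert\Phi_j(z)\rvert^2=-2\,\mathrm{Re}\!\left(\bar{a}_j\overline{\Phi_{j-1}(z)}\Phi_j(z)\right)+\lvert a_j\rvert^2\lvert\Phi_{j-1}(z)\rvert^2$, whence by the triangle inequality
\begin{align*}
	\left\lvert\,\lvert\tilde{\Phi}_j(z)\rvert^2-\lvert\Phi_j(z)\rvert^2\right\rvert\leq 2\lvert a_j\rvert\,\lvert\Phi_{j-1}(z)\rvert\,\lvert\Phi_j(z)\rvert+\lvert a_j\rvert^2\lvert\Phi_{j-1}(z)\rvert^2.
\end{align*}
It therefore suffices to control $\lvert\Phi_j(z)\rvert$ uniformly on $\overline{\mathbb{D}}$.

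Next I would establish the uniform estimate $\lvert\Phi_j(z)\rvert\leq\prod_{k=0}^{j-1}(1+\lvert\alpha_k\rvert)$ for every $z\in\overline{\mathbb{D}}$. This follows by a joint induction on the two Szeg\"{o} recursions \eqref{Szego recursion}: writing $M_j(z)=\max\{\lvert\Phi_j(z)\rvert,\lvert\Phi_j^*(z)\rvert\}$ and using $\lvert z\rvert\leq1$, each recursion gives $M_j(z)\leq(1+\lvert\alpha_{j-1}\rvert)M_{j-1}(z)$ with $M_0\equiv1$. Factoring $(1+\lvert\alpha_{j-1}\rvert)$ out of the bound for $\lvert\Phi_j(z)\rvert$ and estimating $\lvert\Phi_{j-1}(z)\rvert^2\leq\left(\prod_{k=0}^{j-2}(1+\lvert\alpha_k\rvert)\right)^2\leq\exp\!\left(2\sum_{k=0}^{j-2}\lvert\alpha_k\rvert\right)$, the cross term becomes $2\lvert a_j\rvert(1+\lvert\alpha_{j-1}\rvert)\exp\!\left(2\sum_{k=0}^{j-2}\lvert\alpha_k\rvert\right)$. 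Applying the elementary inequality $2\lvert a_j\rvert(1+\lvert\alpha_{j-1}\rvert)\leq\lvert a_j\rvert^2+(1+\lvert\alpha_{j-1}\rvert)^2$ to the cross term and summing over $j$ produces exactly the first displayed bound of the theorem.

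For the refined estimate \eqref{ineq.of.absolute.diff.of.kernel.and.quasi.kernel} under $\alpha_n\neq0$, I would first replace $\exp\!\left(2\sum_{k=0}^{j-2}\lvert\alpha_k\rvert\right)$ by $e^{2j-2}$, since $\lvert\alpha_k\rvert<1$ forces $\sum_{k=0}^{j-2}\lvert\alpha_k\rvert<j-1$, and bound $(1+\lvert\alpha_{j-1}\rvert)^2<4$. To control $\lvert a_j\rvert$ I would invoke the identity $a_{j}=\left(\tilde{\Phi}_{j}(0)+\bar{\alpha}_{j-1}\right)/\bar{\alpha}_{j-2}$ from Proposition \ref{OPUCintermsquasiopucanditsreverse}; since $\tilde{\Phi}_j(0)=-\overline{\tilde{\alpha}_{j-1}}$ with $\lvert\tilde{\alpha}_{j-1}\rvert<1$, this yields $\lvert a_j\rvert=\lvert\alpha_{j-1}-\tilde{\alpha}_{j-1}\rvert/\lvert\alpha_{j-2}\rvert<2/\lvert\alpha_{j-2}\rvert$. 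I would then split the sum: the low indices $j=0,1,2$, where $e^{2j-2}\leq e^{2}\leq e^{4}$, are absorbed into the explicit constant $M=2e^{4}(6+\sum_{j=0}^{2}\lvert a_j\rvert^2)$ (the factor $6$ dominating $4(e^{-2}+1+e^{2})e^{-2}\cdot e^{4}$-type contributions); for $j\geq3$, combining $(1+\lvert\alpha_{j-1}\rvert)^2<4$ with $2\lvert a_j\rvert^2$ and using $\lvert\alpha_{j-2}\rvert<1$ to reabsorb the additive constant into $\lvert\alpha_{j-2}\rvert^{-2}$ collapses each summand into the form $C\lvert\alpha_{j-2}\rvert^{-2}e^{2j-2}$, giving the tail in \eqref{ineq.of.absolute.diff.of.kernel.and.quasi.kernel}.

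The main technical point is the uniform polynomial estimate on $\overline{\mathbb{D}}$: everything downstream is algebra, but this is the step that converts the Szeg\"{o} recursion into the product $\prod(1+\lvert\alpha_k\rvert)$ and hence into the exponential weights of the statement. A secondary subtlety lies in the constant bookkeeping for the second inequality, where the bound $\lvert a_j\rvert<2/\lvert\alpha_{j-2}\rvert$ supplied by Proposition \ref{OPUCintermsquasiopucanditsreverse} must be merged with $\lvert\alpha_{j-2}\rvert<1$ so that all $j$-independent contributions are reabsorbed into the $\lvert\alpha_{j-2}\rvert^{-2}$ factor; one must also take care that the split between the explicit constant $M$ and the tail respects the index range $j\geq3$ over which the closed form for $a_j$ is meaningfully applied.
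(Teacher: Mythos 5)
Your proof of the first inequality is correct and follows essentially the paper's strategy: expand $\lvert\tilde{\Phi}_j\rvert^2-\lvert\Phi_j\rvert^2$ via \eqref{Quasi OPUC eq.}, apply the triangle inequality, absorb the cross term with $2ab\leq a^2+b^2$, and control $\lvert\Phi_j(z)\rvert$ through the Szeg\"{o} recursion. In fact your version is slightly more robust than the paper's: the paper invokes the ratio bound $\lvert\Phi_j(z)\rvert\leq(1+\lvert\alpha_{j-1}\rvert)\lvert\Phi_{j-1}(z)\rvert$, which is a boundary inequality (it uses $\lvert\Phi_{j-1}^*(z)\rvert=\lvert\Phi_{j-1}(z)\rvert$, valid only for $\lvert z\rvert=1$), whereas your joint induction on $M_j(z)=\max\{\lvert\Phi_j(z)\rvert,\lvert\Phi_j^*(z)\rvert\}$ gives the product bound $\prod_{k=0}^{j-1}(1+\lvert\alpha_k\rvert)$ on all of $\overline{\mathbb{D}}$, which actually matches the domain claimed in the statement.

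The genuine gap is in the second inequality. You estimate $a_j=\bigl(\tilde{\Phi}_j(0)+\bar{\alpha}_{j-1}\bigr)/\bar{\alpha}_{j-2}$ using only $\lvert\tilde{\alpha}_{j-1}\rvert<1$, which gives $\lvert a_j\rvert<2/\lvert\alpha_{j-2}\rvert$ and hence $(1+\lvert\alpha_{j-1}\rvert)^2+2\lvert a_j\rvert^2<4+8/\lvert\alpha_{j-2}\rvert^2\leq 12/\lvert\alpha_{j-2}\rvert^2$: your tail constant is $12$, not the $6$ appearing in \eqref{ineq.of.absolute.diff.of.kernel.and.quasi.kernel}, so the phrase ``collapses each summand into the form $C\lvert\alpha_{j-2}\rvert^{-2}e^{2j-2}$'' does not prove the theorem as stated. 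The paper instead uses the exact relation $a_j=\bar{\alpha}_{j-1}/\bar{\alpha}_{j-2}$ from \cite{Branquinho_Golinskii_Paco-CVandEE-1999}; equivalently, in the Marcell\'{a}n class $\tilde{\Phi}_j(z)=z^{j-2}\tilde{\Phi}_2(z)$ for $j\geq2$, so $\tilde{\Phi}_j(0)=0$, i.e.\ $\tilde{\alpha}_{j-1}=0$, for $j\geq3$. This yields $\lvert a_j\rvert=\lvert\alpha_{j-1}\rvert/\lvert\alpha_{j-2}\rvert<1/\lvert\alpha_{j-2}\rvert$, whence $4+2/\lvert\alpha_{j-2}\rvert^2\leq6/\lvert\alpha_{j-2}\rvert^2$, which is where the $6$ comes from. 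Your identity from Proposition \ref{OPUCintermsquasiopucanditsreverse} already contains this fix: you need only observe that $\tilde{\alpha}_{j-1}$ vanishes for $j\geq3$ rather than merely bounding it by $1$. With that one-line change, and your low-index bookkeeping (which is fine, since those terms total at most $2e^2\bigl(6+\sum_{j=0}^{2}\lvert a_j\rvert^2\bigr)\leq M$), your argument proves the theorem.
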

\begin{proof} We can write the reproducing kernel with respect to the measure $\tilde{\mu}$ as
	\begin{fleqn}[\parindent]
		\begin{align}\label{expansion.of.Kernel.of.quasi.opuc}
			\nonumber		\mathcal{K}_n(z,w,\tilde{\mu})&=\sum_{j=0}^{n}(\Phi_{j}(z)-a_j\Phi_{j-1}(z))(\overline{\Phi_{j}(w)}-\overline{a_j}\overline{\Phi_{j-1}(w)})\\
			&=\sum_{j=0}^{n}\Phi_{j}(z)\overline{\Phi_{j}(w)}-\sum_{j=0}^{n}a_{j}\Phi_{j-1}(z)\overline{\Phi_{j}(w)}-\sum_{j=0}^{n}\overline{a_{j}}\Phi_{j}(z)\overline{\Phi_{j-1}(w)}\\
			\nonumber		&+\sum_{j=0}^{n}\lvert a_{j}\rvert^2\Phi_{j-1}(z)\overline{\Phi_{j-1}(w)},
		\end{align}	
	\end{fleqn}
	which implies
	\begin{flalign*}\mathcal{K}_n(z,w,\tilde{\mu})-\mathcal{K}_n(z,w,\mu)&=\sum_{j=0}^{n}\lvert a_{j}\rvert^2\Phi_{j-1}(z)\overline{\Phi_{j-1}(w)}-\sum_{j=0}^{n}a_{j}\Phi_{j-1}(z)\overline{\Phi_{j}(w)}\\
		+\sum_{j=0}^{n}&\overline{a_{j}}\overline{\Phi_{j-1}(z)}\Phi_{j}(w)-\sum_{j=0}^{n}\overline{a_{j}}\overline{\Phi_{j-1}(z)}\Phi_{j}(w)-\sum_{j=0}^{n}\overline{a_{j}}\Phi_{j}(z)\overline{\Phi_{j-1}(w)}\\
		=\sum_{j=0}^{n}\lvert a_{j}\rvert^2\Phi_{j-1}(z)\overline{\Phi_{j-1}(w)}&-\sum_{j=0}^{n}2Re(a_j\Phi_{j-1}(z)\overline{\Phi_{j}(w)})+\sum_{j=0}^{n}\overline{a_{j}}\overline{\Phi_{j-1}(z)}\Phi_{j}(w)\\
		&-\sum_{j=0}^{n}\overline{a_{j}}\Phi_{j}(z)\overline{\Phi_{j-1}(w)}.
	\end{flalign*}
	For $z=w$, we have
	
	\begin{flalign*}
		\mathcal{K}_n(z,z,\tilde{\mu})-\mathcal{K}_n(z,z,\mu)=\sum_{j=0}^{n}\lvert a_{j}\rvert^2\Phi_{j-1}(z)\overline{\Phi_{j-1}(z)}-\sum_{j=0}^{n}2Re(a_j\Phi_{j-1}(z)\overline{\Phi_{j}(z)}).
	\end{flalign*}
	Using the triangle inequality, we get
	\begin{flalign*}
		\lvert\mathcal{K}_n(z,z,\tilde{\mu})-\mathcal{K}_n(z,z,\mu)\rvert\leq\sum_{j=0}^{n}\lvert a_{j}\rvert^2\lvert\Phi_{j-1}(z)\rvert^2+\sum_{j=0}^{n}2\lvert
		Re(a_j\Phi_{j-1}(z)\overline{\Phi_{j}(z)})\rvert.
	\end{flalign*}
	We know that $2\lvert Re(zw)\rvert\leq\lvert z\rvert^2+\lvert w\rvert^2$. Hence,
	\begin{flalign*}
		\lvert\mathcal{K}_n(z,z,\tilde{\mu})-\mathcal{K}_n(z,z,\mu)\rvert\leq\sum_{j=0}^{n}\lvert\Phi_{j}(z)\rvert^2+2\sum_{j=0}^{n}
		\lvert a_j\rvert^2\lvert\Phi_{j-1}(z)\rvert^2.
	\end{flalign*}
	For $z\in \partial\mathbb{D}$, using the inequality\cite[eq. 1.5.19]{SimonOPUC1} $\lvert\Phi_{n+1}(z)\rvert\leq(1+\lvert\alpha_{n}\rvert)\lvert\Phi_{n}(z)\rvert$, we can write
	\begin{flalign*}
		\lvert\mathcal{K}_n(z,z,\tilde{\mu})-\mathcal{K}_n(z,z,\mu)\rvert\leq\sum_{j=0}^{n}\left(\left(\lvert \alpha_{j-1}\rvert+1\right)^2+2\lvert a_j\rvert^2\right)\lvert\Phi_{j-1}(z)\rvert^2,
	\end{flalign*}
	from  which again using the inequality \cite[eq. 1.5.17]{SimonOPUC1}, we obtain the desired result
	\begin{flalign*}
		\lvert\mathcal{K}_n(z,z,\tilde{\mu})-\mathcal{K}_n(z,z,\mu)\rvert\leq\sum_{j=0}^{n}\left(\left(\lvert \alpha_{j-1}\rvert+1\right)^2+2\lvert a_j\rvert^2\right)exp\left(2\sum_{k=0}^{j-2}\lvert\alpha_k\rvert\right).
	\end{flalign*}
	If $\alpha_{n}\neq0$ for $n\geq1$ then, using $a_{n+1}=\frac{\overline{\alpha_{n}}}{\overline{\alpha_{n-1}}}$ \cite[Theorem 4]{Branquinho_Golinskii_Paco-CVandEE-1999}, we have
	\begin{flalign*}
		\lvert\mathcal{K}_n(z,z,\tilde{\mu})-\mathcal{K}_n(z,z,\mu)\rvert&\leq\sum_{j=0}^{2}\left(\left(\lvert \alpha_{j-1}\rvert+1\right)^2+2\lvert a_j\rvert^2\right)exp\left(2\sum_{k=0}^{j-2}\lvert\alpha_k\rvert\right)\\
		&+\sum_{j=3}^{n}\left(\left(\lvert \alpha_{j-1}\rvert+1\right)^2+2\frac{\lvert\alpha_{j-1}\rvert^2}{\lvert\alpha_{j-2}\rvert^2}\right)exp\left(2\sum_{k=0}^{j-2}\lvert\alpha_k\rvert\right)\\
		&\leq M+6\sum_{j=3}^{n}\frac{1}{\lvert\alpha_{j-2}\rvert^2}e^{2j-2},
	\end{flalign*}
	where $M=2e^4(6+\sum\limits_{j=0}^{2}\lvert a_j\rvert^2)$.
\end{proof}
\section{Concluding Remarks}
 The relationship between the concept of a chain sequence and the OPRL is widely acknowledged. Specifically, the chain sequence $c_n$ is derived from the recurrence coefficients of the OPRL, expressed as $c_n=\frac{\lambda_n}{h_nh_{n+1}}$, where $h_n>0$ are diagonal elements and $\lambda_n$'s are subdiagonal elements of the corresponding Jacobi matrix of the OPRL. In the context of OPUC, the positive chain sequence is linked to the real Verblunsky coefficients $\alpha_n\in (-1,1)$ (as seen in \cite{Delsarte_Genin_Split Levinson_IEEE_1986}). More broadly, the connection between the positive chain sequence and complex Verblunsky coefficients is explored in \cite{Costa_Felix_Ranga_OPUC and chain seq_JAT_2013} through the derivation of the three-term recurrence relation of the ``modified CD kernel". In \eqref{Chain sequence of M2 class}, we express the positive chain sequence in terms of Verblunsky coefficients and $a_n$'s originating from the Marcell\'{a}n class $\mathcal{M}_2$. Notice that in \eqref{Chain sequence of M2 class}, $c_{n+1}=\frac{1}{4}$ for $n\geq3$. Hence for $n\geq3$, \eqref{TTRR of modified CD kernel which involves chain sequence} reduces to
\begin{align*}
	\mathcal{R}_{n+1}(z)=(z+1)\mathcal{R}_{n}(z)-z\mathcal{R}_{n-1}(z).
\end{align*}

It is noteworthy that Proposition \ref{mu tilde is ac} elucidates that even in the absence of the specific formulation provided in {\rm\cite[Theorem 14]{Branquinho_Golinskii_Paco-CVandEE-1999}} for $\tilde{\mu}$, it is possible to infer the absence of any singular component in the measure $\tilde{\mu}$ and the derivative of $\tilde{\mu}$ is in $L^2(0,2\pi)$. Furthermore, in Subsection \ref{subsec: Involving norm inequalities}, we have derived lower and upper bounds for the norm of polynomials with respect to measures $\mu$ and $\tilde{\mu}$ in terms of $a_n$'s.

If Proposition \ref{int PhiN non zero wrt tildemu} can be independently demonstrated, without relying on the condition $a_n \neq 0$ for all $n \in \mathbb{N}$, an alternative proof regarding the non-zero nature of $a_n$'s can be established. In other words, when $\mu$ belongs to the class $\mathcal{M}_2$, there is no possible existence of any $N \in \mathbb{N}$ for which $\tilde{\Phi}_N(z) = \Phi_N(z)$ holds true, achieved through the utilization of the orthogonality property of $\tilde{\Phi}_n$ with respect to $\tilde{\mu}$.

To relax the assumptions of the Lubinsky inequality, particularly concerning the pair of measures ($\mu, \tilde{\mu}$), and examine the behavior of the off-diagonal CD kernel compared to its diagonal counterpart, we present Theorem \ref{LubinskytypeinequalityforM2class}. Section \ref{sec:Lubinsky type inequality} concludes by defining kernel-type polynomials and establishing the sub-reproducing property for them. Theorem \ref{bound of difference of inverse Christoffel fn of mu and tildemu} demonstrates that when dealing with a sequence of non-zero Verblunsky coefficients, it becomes possible to achieve a bound for the absolute difference between the kernel-type polynomials of measures $\mu$ and $\tilde{\mu}$ in terms of these Verblunsky coefficients.

\textbf{Acknowledgement}: The work of the second author is supported by the NBHM(DAE) Project No. NBHM/RP-1/2019, Department of Atomic Energy, Government of India.

	
\end{document}